\newcounter{main}
\numberwithin{equation}{section}
\newtheorem{theorem}{Theorem}[section]
\newtheorem{proposition}[theorem]{Proposition}
\newtheorem{lemma}[theorem]{Lemma}
\newtheorem{remark}[theorem]{Remark}
\newtheorem{definition}[theorem]{Definition}
\newtheorem{maintheorem}{Theorem}
\newcommand{\tra}{\overline}
\newcommand{\muo}{\tra\mu}
\newcommand{\Rr}{{\mathbb{R}}}
\newcommand{\Nn}{{\mathbb{N}}}
\newcommand{\LL}{{\mathcal L}}
\newcommand{\NN}{{\mathcal N}}
\newcommand{\EE}{{\mathcal E}}
\newcommand{\RR}{{\mathcal R}}
\newcommand{\VV}{{\mathcal V}}
\newcommand{\En}{{\mathcal E}}
\newcommand{\pde}[2]{\frac{\partial #1}{\partial #2}}
\def \dim{\operatorname{dim}}
\def \Sing {\operatorname{Crit}}
\def \LE{\operatorname{LE}}
\begin{document}

\title[Generic dynamics of 4-dim Hamiltonians]
{Generic dynamics of $4$-dimensional $C^2$ Hamiltonian systems}

\author[M. Bessa]{M\'{a}rio Bessa}
\address{Centro de Matem\'atica da Universidade do Porto, Rua do Campo Alegre, 687, 4169-007 Porto, Portugal}
\email{bessa@impa.br}

\author[J. Lopes Dias]{Jo\~{a}o Lopes Dias}
\address{Departamento de Matem\'atica, ISEG, 
Universidade T\'ecnica de Lisboa,
Rua do Quelhas 6, 1200-781 Lisboa, Portugal}
\email{jldias@iseg.utl.pt}

\date{December 17, 2007}

\begin{abstract}
We study the dynamical behaviour of Hamiltonian flows defined on $4$-dimensional compact symplectic manifolds.
We find the existence of a $C^2$-residual set of Hamiltonians for which there is an open mod 0 dense set of regular energy surfaces each being either Anosov or having zero Lyapunov exponents almost everywhere.
This is in the spirit of the Bochi-Ma\~n\'e dichotomy for area-preserving diffeomorphisms on compact surfaces~\cite{B} and its continuous-time version for $3$-dimensional volume-preserving flows~\cite{Be}.
\end{abstract}

\maketitle

\begin{section}{Introduction and statement of the results}

The computation of Lyapunov exponents is one of the main problems in the modern theory of dynamical systems. 
They give us fundamental information on the asymptotic exponential behaviour of the linearized system. 
It is therefore important to understand these objects in order to study the time evolution of orbits. 
In particular, Pesin's theory deals with non-vanishing Lyapunov exponents systems (non-uniformly hyperbolic). This setting jointly with a $C^{\alpha}$ regularity, $\alpha>0$, of the tangent map allows us to derive a very complete geometric picture of the dynamics (stable/unstable invariant manifolds). On the other hand, if we aim at understanding both local and global dynamics, the presence of zero Lyapunov exponents creates lots of obstacles.
An example is the case of conservative systems: using enough differentiability, the celebrated KAM theory guarantees persistence of invariant quasiperiodic motion on tori yielding zero Lyapunov exponents.

In this paper we study the dependence of the Lyapunov exponents on the dynamics of Hamiltonian flows.
Despite the fact that the theory of Hamiltonian systems ask, in general, for more refined topologies, here we work in the framework of the $C^1$ topology of the Hamiltonian vector field.
Our motivation comes from a recent result of Bochi~\cite{B} for area-preserving diffeomorphisms on compact surfaces, followed by its continuous time counterpart~\cite{Be} for volume-preserving flows on compact $3$-manifolds.
We point out that these results are based on the outlined approach of Ma\~{n}\'{e}~\cite{M1,M2}. 
Furthermore, Bochi and Viana (see~\cite{BV2}) generalized the result in~\cite{B} and proved also a version for linear cocycles and symplectomorphisms in any finite dimension. For a survey of the theory see~\cite{BV3} and references therein. 

Here we prove that zero Lyapunov exponents for $4$-dimensional Hamiltonian systems are very common, at least for a $C^2$-residual subset.
This picture changes radically for the $C^\infty$ topology, the setting of most Hamiltonian systems coming from applications. In this case Markus and Meyer showed that there exists a residual of $C^\infty$ Hamiltonians neither integrable nor ergodic \cite{Markus}.

\medskip

Let $(M,\omega)$ be a compact symplectic manifold. We will be interested on the Hamiltonian dynamics of real-valued $C^s$, $2\leq s\leq+\infty$, functions on $M$ that are constant on each comnected component of the boundary $\partial M$. These functions are referred to as Hamiltonians on $M$ and their set will be denoted by $C^s(M,\Rr)$ which we endow with the $C^2$-topology. Moreover, we include in this definition the case of $M$ without boundary $\partial M=\emptyset$.
We assume $M$ and $\partial M$ (when it exists) to be both smooth.

Given a Hamiltonian $H$, any scalar $e\in H(M)\subset\Rr$ is called an energy of $H$ and $H^{-1}(e)=\{x\in M\colon H(x)=e\}$ the corresponding invariant energy level set. 
It is regular if it does not contain critical points.
An energy surface $\En$ is a connected component of $H^{-1}(e)$. 
Notice that connected components of $\partial M\not=\emptyset$ correspond to energy surfaces.

The volume form $\omega^d$ gives a measure $\mu$ on $M$ that is preserved by the Hamiltonian flow.
Recall that for a $C^2$-generic Hamiltonian all but finitely many points are regular (hence a full $\mu$-measure set), since Morse functions are $C^2$-open and dense.
On each regular energy surface $\EE\subset M$ there is a natural finite invariant volume measure $\mu_\EE$ (see section \ref{subsection:basic notions}).

\begin{maintheorem}\label{teorema22}
Let $(M,\omega)$ be a $4$-dim compact symplectic manifold. 
For a $C^2$-generic Hamiltonian $H\in C^2(M,\Rr)$, the union of the regular energy surfaces $\EE$ that are either Anosov or have zero Lyapunov exponents $\mu_\EE$-a.e. for the Hamiltonian flow, forms an open $\mu$-mod $0$ and dense subset of $M$.
\end{maintheorem}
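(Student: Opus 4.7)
The plan is to transfer the Bochi--Ma\~n\'e dichotomy framework to the Hamiltonian setting by reducing, on each regular energy surface, the analysis of Lyapunov exponents to that of a two-dimensional symplectic cocycle, and then carrying out a global argument over the foliation of $M$ by energy surfaces.

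\textbf{Step 1: Reduction to the linear Poincar\'e flow.} On a $4$-dim symplectic $(M,\omega)$, a regular energy surface $\EE$ is $3$-dimensional and invariant. Along any orbit, the tangent space to $M$ splits as $\langle X_H\rangle\oplus\langle\nabla H\rangle\oplus N$, where $N$ is the $\omega$-symplectic orthogonal complement inside $T\EE$ of the flow direction; it is a $2$-dimensional symplectic bundle on which the linearized flow induces a symplectic cocycle, the \emph{reduced linear Poincar\'e flow} $\Phi^t_H$. Two of the four Lyapunov exponents vanish trivially (flow and transverse-to-energy directions), so the essential information is the pair $\pm\lambda(H,x)$ coming from $\Phi^t_H$. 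This places us in the exact setting of area-preserving $2$D cocycles treated by Bochi, now varying in the base $\EE\subset M$.

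\textbf{Step 2: Upper semicontinuity and continuity points.} For a regular energy surface $\EE$, define
\[
\LE(H,\EE)=\int_{\EE}\lambda(H,x)\,d\mu_\EE(x),
\]
which by Kingman's subadditive ergodic theorem is an infimum of continuous functions of $H$, hence upper semicontinuous in the $C^2$ topology. Integrating against $e$ gives a global functional $H\mapsto \int_{M}\lambda(H,\cdot)\,d\mu$ that is also u.s.c.; its continuity points form a $C^2$-residual set $\RR\subset C^2(M,\Rr)$. The strategy is to prove that every $H\in\RR$ already satisfies the conclusion of the theorem.

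\textbf{Step 3: Dichotomy via Ma\~n\'e--Bochi perturbations.} Suppose $H\in\RR$ and let $\EE$ be a regular energy surface on which $\Phi^t_H$ has some positive Lyapunov exponent on a set of positive $\mu_\EE$-measure but does not admit a dominated splitting on its (non-periodic part of the) non-wandering set. The core Bochi--Ma\~n\'e lemma, adapted to symplectic $2$-cocycles, then produces arbitrarily small $C^2$-perturbations of $H$, supported near a controlled orbit segment inside $\EE$, which rotate the Oseledets directions and drive $\LE(H,\EE)$ strictly down while perturbing neighbouring surfaces by an arbitrarily small amount in $L^1$. This contradicts continuity at $H$ once one patches such perturbations together over a positive-measure set of energies. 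Hence at each such $H$, on every regular $\EE$, either $\lambda=0$ a.e.\ or $\Phi^t_H|_\EE$ has a dominated splitting. In two symplectic dimensions, dominated splitting is equivalent to uniform hyperbolicity of the reduced flow, which for Hamiltonian flows on a $3$-dim energy surface is equivalent to $\EE$ being Anosov.

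\textbf{Step 4: Openness, density and full measure.} Let $\mathcal A(H)\subset M$ be the union of Anosov regular energy surfaces and $\mathcal Z(H)$ the union of those with zero exponents a.e. The Anosov property is $C^2$-open and persists on nearby energy surfaces, so $\mathcal A(H)$ is open in $M$. For density, given any open $U\subset M$ pick a regular value $e$ with $H^{-1}(e)\cap U\neq\emptyset$; by Step 3 its energy surface lies in $\mathcal A(H)\cup\mathcal Z(H)$. The set of Hamiltonians for which the \emph{interior} of $\mathcal A(H)\cup\mathcal Z(H)$ is dense is itself residual (a standard Kupka--Smale style $G_\delta$-density argument combined with the fact that $\mathcal Z(H)$ is the complement in $\bigcup\EE$ of $\mathcal A(H)$ modulo a $\mu$-null set). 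The $\mu$-mod $0$ openness follows because critical energy levels form a $\mu$-null set under Morse genericity, and the remaining regular surfaces are foliated continuously.

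\textbf{Main obstacle.} The delicate point is Step 3: executing the Bochi--Ma\~n\'e rotation perturbation by a \emph{Hamiltonian} deformation, i.e.\ one that respects the symplectic structure, is constrained to a chosen energy surface, and is small in the $C^2$ (not $C^1$) topology of the Hamiltonian function. One must build perturbations $H+\varepsilon h$ with $h$ supported in a thin tubular neighbourhood of an orbit arc so that the induced Hamiltonian vector field realises a prescribed small rotation of $N$ along that arc; controlling the $C^2$-norm while achieving a definite rotation angle is what forces careful use of flow-box coordinates and symplectic normal forms, and is where the bulk of the technical work will lie.
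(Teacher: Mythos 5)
Your proposal follows the ``continuity-point / jump'' strategy from Bochi and Bochi--Viana: restrict to the residual set $\RR$ where $H\mapsto\int_M\lambda^+(H,\cdot)\,d\mu$ is continuous, and argue by contradiction that if some energy surface had neither zero exponents a.e.\ nor a dominated splitting, a Bochi--Ma\~n\'e rotation perturbation would lower $\LE$ and violate continuity at $H$. The paper explicitly explains (end of Section 3.4) that this strategy \emph{does not work} in the Hamiltonian $C^2$ setting, and your Step~3 has exactly the gap it points to. The perturbation machinery (the symplectic flowbox Theorem~\ref{robinson} and all of Section~\ref{perturbations}, packaged into Proposition~\ref{main}) needs the unperturbed Hamiltonian $H$ to be $C^{s+1}$ with $s\geq 2$, i.e.\ at least $C^3$; if $H$ is merely a $C^2$ continuity point, the rectifying symplectomorphism is only $C^1$, so one cannot realise the prescribed rotation by a $C^2$-small Hamiltonian perturbation of $H$ itself. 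Approximating $H$ by a nearby $C^3$ Morse function $\widehat H$ does not repair the contradiction either: the perturbed object is then $\widehat H$, not $H$, and the sets $\Gamma_m$ along with $\LE$ change with the Hamiltonian, so the ``jump'' at $H$ is not forced to zero. (One also cannot simply pass to $C^3(M,\Rr)$ with the $C^2$-topology, which is not Baire, so ``residual'' there is meaningless.) This is precisely why the paper proves Theorem~\ref{teorema2} as a dense, not residual, statement and must take a different route for Theorem~\ref{teorema22}.

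The route the paper actually takes avoids perturbing a fixed $C^2$ continuity point entirely. It works in the product $\mathcal M=M\times C^2(M,\Rr)$, declares $A=\{(p,H):\EE_p(H)\text{ Anosov}\}$ (open by structural stability), and on $B=\mathcal M\setminus\overline A$ introduces a tubular energy-neighbourhood $\VV_{p,H}$ of non-Anosov surfaces. The open sets $A_k=\{(p,H)\in B:\LE(H,\VV_{p,H})<1/k\}$ are shown to be dense in $B$ by applying Proposition~\ref{main} only to $C^3$ Morse Hamiltonians (a $C^2$-dense set), where the regularity needed for the perturbation is available; one never needs to perturb a generic $C^2$ Hamiltonian. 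The residual $\bigcap_k(A\cup A_k)$ is then sliced with the Bochi--Fayad Proposition~A.7 to extract a residual set of Hamiltonians and, for each, a residual set of base points, from which the open-mod-0-and-dense conclusion on $M$ is read off. Your Step~4 invocation of a ``Kupka--Smale style $G_\delta$-density argument'' is doing a lot of unjustified work exactly where the paper needs the product-space construction and the slicing lemma. Your Step~1 (reduction to the transversal linear Poincar\'e flow and the Anosov $\Leftrightarrow$ dominated-splitting-on-an-energy-surface equivalence) matches Lemmas~\ref{equal}, \ref{hyperbolic}, \ref{hyperbolic2} and Proposition~\ref{Bowen2} in the paper and is fine; and you correctly identify in your ``Main obstacle'' paragraph that the $C^2$-small Hamiltonian perturbations are the technical crux, but you then assume they can be produced \emph{for the given} $C^2$ $H$, which is where the argument fails.
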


A regular energy surface is Anosov if it is uniformly hyperbolic for the Hamiltonian flow (cf. section \ref{section:hyperb}).
Geodesic flows on negative curvature surfaces are well-known systems yielding Anosov energy levels. An example of a mechanical system which is Anosov on each positive energy level was obtained by Hunt and MacKay~\cite{Hunt}.

We prove another dichotomy result for the transversal linear Poincar\'{e} flow on the tangent bundle (see section \ref{subsection:transversal linear Poincare flow}). This projected tangent flow can present a weaker form of hyperbolicity, a dominated splitting (see section \ref{section:hyperb2}).

\begin{maintheorem}\label{teorema2}
Let $(M,\omega)$ be a $4$-dimensional compact symplectic manifold.
There exists a $C^2$-dense subset $\mathfrak{D}$ of $C^{2}(M,\mathbb{R})$ such that, if $H\in{\mathfrak{D}}$, there exists an invariant decomposition $M=D\cup Z \pmod 0$ satisfying:
\begin{itemize}
\item
$D={\bigcup}_{n\in\mathbb{N}}D_{m_{n}}$, where $D_{m_{n}}$ is a set with $m_{n}$-dominated splitting for the transversal linear Poincar\'{e} flow of $H$, and
\item
the Hamiltonian flow of $H$ has zero Lyapunov exponents for $x\in Z$.
\end{itemize}
\end{maintheorem}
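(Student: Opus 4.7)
The plan is to take $\mathfrak{D}$ to be the set of $C^{2}$-continuity points of the integrated upper Lyapunov exponent
\[
\Lambda(H) \;:=\; \int_{M} \lambda^{+}(H,x)\, d\mu(x),
\]
where $\lambda^{+}(H,x)$ is the largest Lyapunov exponent at $x$ of the transversal linear Poincar\'e flow $\Phi_{H}^{t}$ (defined $\mu$-a.e.\ by Oseledets). First I would verify that $\Lambda$ is upper semicontinuous in the $C^{2}$-topology: by Kingman, $\lambda^{+}(H,x)$ is the decreasing limit in $t$ of $\frac{1}{t}\log\|\Phi_{H}^{t}(x)\|$, and each finite-time quantity depends continuously on $H$ because $X_{H}$ depends continuously on $H$ in $C^{1}$. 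So $\Lambda$ is the infimum of a monotone family of continuous functions, and by a standard general-topology fact its continuity set is $G_{\delta}$-dense in $C^{2}(M,\Rr)$; in particular it is $C^{2}$-dense, giving the required density of $\mathfrak{D}$.

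Next, fix $H\in\mathfrak{D}$. Let $D_{m_{n}}$ denote the set of points admitting an $m_{n}$-dominated splitting of the transversal bundle under $\Phi_{H}^{t}$, let $D=\bigcup_{n\in\Nn} D_{m_{n}}$, and $Z=M\setminus D$. The theorem reduces to showing $\lambda^{+}(H,\cdot)=0$ for $\mu$-a.e.\ $x\in Z$. Since $\dim M=4$, after quotienting by the flow direction, the symplectic form $\omega$ induces a symplectic form on the $2$-dimensional transversal bundle, so $\Phi_{H}^{t}$ is an $SL(2,\Rr)$-cocycle; in this dimension failure of dominated splitting at an orbit is equivalent to the angle between the two Oseledets directions being non-uniformly small along the orbit, which is precisely the hypothesis for a Bochi--Ma\~n\'e perturbation scheme.

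The core perturbation lemma to establish is: if $\mu(Z\cap\{\lambda^{+}>0\})>0$, then for every $\varepsilon>0$ there exists $\widetilde H$ with $\|H-\widetilde H\|_{C^{2}}<\varepsilon$ and $\Lambda(\widetilde H)\leq \Lambda(H)-\delta$ for some $\delta>0$. Combined with upper semicontinuity this contradicts $H\in\mathfrak{D}$ and completes the proof. The perturbation is built by choosing a long orbit segment inside $Z\cap\{\lambda^{+}>0\}$ along which the stable/unstable Oseledets directions come arbitrarily close, and then, inside disjoint flowboxes covering that segment, adding a Hamiltonian bump expressed in Darboux coordinates as a cutoff times a quadratic form on the transverse variables. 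The infinitesimal effect on $\Phi_{H}^{t}$ is a prescribed small rotation of the transversal plane; accumulated along the segment these rotations rotate the weak-unstable direction onto the weak-stable one, forcing cancellation of the top exponent on a set of definite $\mu$-measure and hence a definite drop in $\Lambda$.

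The main obstacle is the realization of these perturbations as genuinely $C^{2}$-small Hamiltonian perturbations whose derivative cocycle implements the prescribed rotation without disturbing the energy-surface structure. In the purely volume-preserving category of \cite{Be} the perturbation tool is looser; here the flowboxes must be adapted simultaneously to $X_{H}$, to the level sets of $H$, and to the symplectic form, so that support of $\widetilde H-H$ does not leak across energy surfaces in an uncontrolled way. Controlling the $C^{2}$-norm of $\widetilde H-H$ in terms of the required rotation angle, while ensuring that composing many localized rotations accumulates the desired effect on the cocycle, is the Hamiltonian analogue of the Bochi--Viana symplectic perturbation lemma and is the technical heart of the argument.
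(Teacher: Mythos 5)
Your proposal, in effect, reproduces the Bochi--Viana ``continuity point'' strategy that the authors explicitly considered and rejected at the end of section~3.4. The set $\mathfrak{D}$ you want to use is a residual set of $C^{2}$ Hamiltonians, and the core of your argument requires applying a Hamiltonian perturbation scheme at an arbitrary continuity point $H\in\mathfrak{D}$ to force a definite drop in $\Lambda$. That step does not go through here, and the obstruction is a concrete loss of differentiability: the perturbation machinery (Lemma~\ref{rlf1} built on the symplectic flowbox Theorem~\ref{robinson}) takes an input Hamiltonian of class $C^{s+1}$ with $s\geq 2$ and produces a $C^{s}$ perturbation $\widetilde H$; the straightening symplectomorphism $g$ is only of class $C^{s-1}$, so if $H$ is merely $C^{2}$ then $g$ is only $C^{1}$ and there is no way to control the $C^{2}$-norm of the bump $\widetilde H - H = (\widehat H - H_{0})\circ g_{x'}$. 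Since a generic $C^{2}$-continuity point of $\Lambda$ is only $C^{2}$ (and $C^{3}(M,\Rr)$ with the $C^{2}$-topology is not a Baire space, so one cannot simply restrict the residual set to $C^{3}$), the contradiction you aim to derive cannot be produced, and no residual statement of this form is available with these tools.

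What the paper actually does is weaker precisely to avoid this: it proves density rather than residuality. Starting from any $H_{0}\in C^{\infty}(M,\Rr)$, Proposition~\ref{main} is applied inductively to build a sequence $H_{n}\in C^{\infty}$ (so each term admits the required $C^{3}$-smooth flowbox) with $\|H_{n}-H_{n-1}\|_{C^{2}}\leq\epsilon_{n}$, $H_{n}=H_{n-1}$ on $D_{m_{n}}(H_{n-1})$, and $\LE(H_{n},\Gamma_{m_{n}}(H_{n-1}))\leq 2^{-n}$. The $C^{2}$-limit $H'$ exists and, by upper semicontinuity of $\LE(\cdot,\Gamma)$ for a common invariant set $\Gamma$, inherits $\LE(H',Z)=0$ on $Z=\bigcap_{i}\Gamma_{m_{i}}(H_{i-1})$, while $D=\bigcup_{i}D_{m_{n_{i}}}(H_{n_{i}-1})$ carries dominated splittings because $H'$ agrees with $H_{n_{i}}$ there. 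This yields the dense set $\mathfrak{D}$ as the collection of such limits. Your abstract description of the perturbation effect (rotating the transversal bundle in symplectic flowbox coordinates to exchange Oseledets directions) matches the paper's Lemmas~\ref{basic}--\ref{smallnorm}, but the global deduction must be the dense limit construction, not the continuity-point dichotomy.
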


The results above follow closely the strategy applied in~\cite{Be} for volume-preserving flows. 
Besides the decomposition of the manifold into invariant sets for each energy, the main novelty here is the construction of Hamiltonian perturbations. 
Once those are built, we use abstract arguments developed in~\cite{B} and~\cite{Be} to conclude the proofs. 
Nevertheless, for completeness, we will present all the ingredients in the Hamiltonian framework.
At the end of section \ref{section: proof of thm 2} we discuss why in Theorem \ref{teorema2} we are only able to prove the existence of a dense subset instead of residual.

At this point it is interesting to recall a related $C^{2}$-generic dichotomy by Newhouse~\cite{N}. That states the existence of a $C^2$-residual set of all Hamiltonians on a compact symplectic $2d$-manifold, for which an energy surface through any $p\in M$ is Anosov or is in the closure of $1$-elliptical periodic orbits. For another related result, in the topological point of view, we mention a recent theorem by Vivier~\cite{Vi}: any $4$-dimensional Hamiltonian vector field admitting a robustly transitive regular energy surface is Anosov on that surface.

\medskip

In section \ref{section:preliminaries} we introduce the main tools for the proofs of the above theorems (section \ref{section:Proof of the main theorems}).
These are based on Proposition~\ref{main} for which we devote the rest of the paper. The fundamental point is the construction of the perturbations of the Hamiltonian in section~\ref{perturbations}. Finally, we conclude the proof in section~\ref{end} by an abstract construction already contained in~\cite{Be}, which works equally in the present setting.

\end{section}

\begin{section}{Preliminaries}\label{section:preliminaries}

\subsection{Basic notions}
\label{subsection:basic notions}

Let $M$ be a $2d$-dimensional manifold endowed with a symplectic structure, i.e. a closed and nondegenerate 2-form $\omega$. The pair $(M,\omega)$ is called a symplectic manifold which is also a volume manifold by Liouville's theorem. Let $\mu$ be the so-called Lebesgue measure associated to the volume form $\omega^d=\omega\land\dots\land\omega$.

A diffeomorphism $g\colon(M,\omega)\to(N,\omega')$ between two symplectic manifolds is called a symplectomorphism if $g^*\omega'=\omega$. The action of a diffeomorphism on a 2-form is given by the pull-back $(g^*\omega')(X,Y)=\omega'(g_*X,g_*Y)$. Here $X$ and $Y$ are vector fields on $M$ and the push-forward $g_*X=Dg\,X$ is a vector field on $N$. Notice that a symplectomorphism $g\colon M\to M$ preserves the Lebesgue measure $\mu$ since $g^*\omega^d=\omega^d$.

For any smooth Hamiltonian function $H\colon M\to{\mathbb{R}}$ there is a corresponding Hamiltonian vector field $X_H\colon M\to TM$ determined by $\iota_{X_H}\omega=dH$ being exact, where $\iota_v\omega=\omega(v,\cdot)$ is a 1-form. Notice that $H$ is $C^s$ iff $X_H$ is $C^{s-1}$. The Hamiltonian vector field generates the Hamiltonian flow, a smooth 1-parameter group of symplectomorphisms $\varphi^{t}_{H}$ on $M$ satisfying $\frac{d}{dt}{\varphi^{t}_{H}}=X_{H}\circ\varphi^{t}_{H}$ and $\varphi^0_H=\rm{id}$. Since $dH(X_H)=\omega(X_H,X_H)=0$, $X_H$ is tangent to the energy level sets $H^{-1}(e)$. 
In addition, the Hamiltonian flow is globally defined with respect to time because $H|_{\partial M}$ is constant or, equivalently, $X_H$ is tangent to $\partial M$.

If $v\in T_xH^{-1}(e)$, i.e. $dH(v)(x)=\omega(X_H,v)(x)=0$, then its push-forward by $\varphi_H^t$ is again tangent to $H^{-1}(e)$ on $\varphi_H^t(x)$ since 
$$
dH(D\varphi_H^t\,v)(\varphi_H^t(x))=\omega(X_H,D\varphi_H^t\,v)(\varphi_H^t(x))={\varphi_H^t}^*\omega(X_H,v)(x)=0.
$$

We consider also the tangent flow $D\varphi^{t}_{H}:TM\to{TM}$ that satisfies the linear variational equation (the linearized differential equation) 
$$
\frac{d}{dt}{D\varphi^{t}_{H}}=DX_{H}(\varphi^{t}_{H})\, D\varphi^{t}_{H}
$$
with $DX_H\colon M\to TTM$.

We say that $x$ is a \emph{regular} point if $dH(x)\not=0$ ($x$ is not critical). We denote the set of regular points by $\mathcal{R}(H)$ and the set of critical points by $\Sing(H)$.
We call $H^{-1}(e)$ a regular energy level of $H$ if $H^{-1}(e)\cap\Sing(H)=\emptyset$.
A regular energy surface is a connected component of a regular energy level.

Given any regular energy level or surface $\mathcal{E}$, we induce a volume form $\omega_{\mathcal{E}}$ on the $(2d-1)$-dimensional manifold $\mathcal{E}$ in the following way. For each $x\in \EE$,
$$ 
\omega_{\mathcal{E}}(x)=\iota_Y \omega^{d}(x)
\quad\text{on $T_x\EE$}
$$ 
defines a $(2d-1)$ non-degenerate form if $Y\in T_xM$ satisfies $dH(Y)(x)=1$. 
Notice that this definition does not depend on $Y$ (up to normalization) as long as it is transversal to $\EE$ at $x$.
Moreover, $dH(D\varphi_H^t\,Y)(\varphi_H^t(x))=d(H\circ\varphi_H^t) (Y)(x)=1$. Thus, $\omega_{\mathcal{E}}$ is $\varphi_H^t$-invariant, and the measure $\mu_{\mathcal{E}}$ induced by $\omega_{\mathcal{E}}$ is again invariant. In order to obtain finite measures, we need to consider compact energy levels.

On the manifold $M$ we also fix any Riemannian structure which induces a norm $\|\cdot\|$ on the fibers $T_xM$. We will use the standard norm of a bounded linear map $A$ given by $\|A\|=\sup_{\|v\|=1}\|A\,v\|$.

The symplectic structure guarantees by Darboux theorem the existence of an atlas $\{h_j\colon U_j\to\Rr^{2d}\}$ satisfying $h_j^*\omega_0=\omega$ with 
\begin{equation}\label{canonical symplectic form}
\omega_0=\sum_{i=1}^d dy_i\land dy_{d+i}.
\end{equation}
On the other hand, when dealing with volume manifolds $(N,\Omega)$ of dimension $p$, Moser's theorem \cite{Moser} gives an atlas $\{h_j\colon U_j\to\Rr^{p}\}$ such that $h_j^*(dy_1\land\dots\land dy_p)=\Omega$.


\begin{subsection}{Oseledets' theorem for 4-dim Hamiltonian systems}

Unless indicated, for the rest of this paper we fix a $4$-dimensional compact symplectic manifold $(M,\omega)$.
Take $H\in C^2(M,\Rr)$. Since the time-1 map of any tangent flow derived from a Hamiltonian vector field is measure preserving, we obtain a version of Oseledets' theorem~\cite{O} for Hamiltonian systems. 
Given $\mu$-a.e. point $x\in{M}$ we have two possible splittings:
\begin{enumerate}
\item \label{case 1 OS}
$T_{x}M=E_{x}$ with $E_{x}$ $4$-dimensional and
$$
\underset{t\to{\pm{\infty}}}{\lim}\frac{1}{t}\log{\|D\varphi^{t}_{H}(x)\, v\|}=0,
\qquad
v\in E_{x}.
$$
\item \label{case 2 OS}
$T_{x}M=E^{+}_{x}\oplus E^{-}_{x}\oplus{E^{0}_{x}}\oplus{\mathbb{R}X_{H}(x)}$, where $\mathbb{R}X_{H}(x)$ denotes the vector field direction, each one of these subspaces being $1$-dimensional and
\begin{itemize}
\item
$\lim\limits_{t\to\pm\infty}
\frac{1}{t}\log{\|D\varphi^{t}_{H}(x)|_{{E^{0}_{x}}\oplus{\mathbb{R}X_{H}(x)}}\|}=0$;
\item
$\lambda^{+}(H,x)=\lim\limits_{t\to\pm\infty} \frac{1}{t}\log{\|D\varphi^{t}_{H}(x)|_{E^{+}_{x}}\|}>0$;
\item
$\lambda^{-}(H,x)=\lim\limits_{t\to\pm\infty} \frac{1}{t}\log{\|D\varphi^{t}_{H}(x)|_{E^{-}_{x}}\|} = -\lambda^{+}(H,x)$.
\end{itemize}
\end{enumerate}
Moreover,
\begin{equation}\label{angle}
\lim_{t\to{\pm{\infty}}}\frac{1}{t}\log{\det D\varphi^{t}_{H}(x)=\sum_{i\in\{+,-\}}\lambda^{i}(H,x)\,\dim(E^{i}_{x})}=0
\end{equation}
and
\begin{equation}\label{angle2}
\lim_{t\to\pm\infty}\frac1t\log \sin\alpha_t = 0
\end{equation}
where $\alpha_t$ is the angle at time $t$ between any subspaces of the splitting.

The splitting of the tangent bundle is called \emph{Oseledets splitting} and the real numbers $\lambda^{\pm}(H,x)$ are called the \emph{Lyapunov exponents}. In the case \eqref{case 1 OS} we say that the Oseledets splitting is trivial. The full measure set of the \emph{Oseledets points} is denoted by $\mathcal{O}(H)$. 

The vector field direction $\mathbb{R}X_{H}(x)$ is trivially an Oseledets's direction with zero Lyapunov exponent.

\end{subsection}

\begin{subsection}{The transversal linear Poincar\'{e} flow of a Hamiltonian}
\label{subsection:transversal linear Poincare flow}

For each $x\in\mathcal{R}$ (we omit $H$ when there is no ambiguity) take the orthogonal splitting $T_xM=\Rr X_H(x)\oplus N_x$, where $N_x=(\Rr X_H(x))^\perp$ is the normal fiber at $x$. 
Consider the automorphism of vector bundles
\begin{equation}
\begin{split}
D\varphi^{t}_{H}\colon  T_{\mathcal{R}}M & \to T_{\mathcal{R}}M \\
(x,v) & \mapsto  (\varphi^{t}_{H}(x),D\varphi_{H}^{t}(x)\, v).
\end{split}
\end{equation}
Of course that, in general, the subbundle $N_{\mathcal{R}}$ is not $D\varphi_{H}^{t}$-invariant. So we relate to the $D\varphi^{t}_{H}$-invariant quotient space $\widetilde{N}_{\mathcal{R}}=T_{\mathcal{R}}M / \mathbb{R}X_{H}(\mathcal{R})$ with an isomorphism $\phi_{1}\colon N_{\mathcal{R}}\to \widetilde{N}_{\mathcal{R}}$ (which is also an isometry). 
The unique map 
$$
P_{H}^{t}\colon N_{\mathcal{R}}\to N_{\mathcal{R}}
$$ 
such that $\phi_{1}\circ P_{H}^{t}=D\varphi^{t}_{H}\circ\phi_{1}$ is called the \emph{linear Poincar\'{e} flow} for $H$. Denoting by $\Pi_{x}\colon T_xM\to N_x$ the canonical orthogonal projection, the linear map $P^{t}_{H}(x)\colon N_{x}\to N_{\varphi^{t}_{H}(x)}$ is 
$$
P^{t}_{H}(x)\, v=\Pi_{\varphi^{t}_{H}(x)}\circ D\varphi^{t}_{H}(x)\, v.
$$

We now consider 
$$
\NN_x=N_x\cap T_xH^{-1}(e),
$$
where $T_xH^{-1}(e)=\ker dH(x)$ is the tangent space to the energy level set with $e=H(x)$.
Thus, $\NN_\RR$ is invariant under $P^{t}_{H}$.
So we define the map
$$
\Phi_{H}^{t}\colon\mathcal{N}_{\mathcal{R}}\to\mathcal{N}_{\mathcal{R}},
\qquad
\Phi_{H}^{t}=P^{t}_{H}|_{\NN_\RR},
$$
called the \emph{transversal linear Poincar\'{e} flow} for $H$ such that
$$
\Phi^{t}_{H}(x)\colon \mathcal{N}_{x}\to \mathcal{N}_{\varphi^{t}_{H}(x)},
\quad
\Phi^{t}_{H}(x)\, v=\Pi_{\varphi^{t}_{H}(x)}\circ D\varphi^{t}_{H}(x)\, v
$$
is a linear symplectomorphism for the symplectic form induced on $\NN_\RR$ by $\omega$.

If $x\in\mathcal{R}\cap\mathcal{O}$ and $\lambda^+(x)>0$, the Oseledets splitting on $T_{x}M$ induces a $\Phi^{t}_{H}(x)$-invariant splitting $\mathcal{N}_{x}=\mathcal{N}^{+}_{x}\oplus \mathcal{N}^{-}_{x}$ where $\mathcal{N}^{\pm}_{x}=\Pi_{x}(E^{\pm}_{x})$. 

\end{subsection}

\subsection{Lyapunov exponents}

Our next lemma explicits that the dynamics of $D\varphi^{t}_{H}$ and $\Phi_{H}^{t}$ are coherent so that the Lyapunov exponents for both cases are related.

\begin{lemma}\label{equal}
Given $x\in\mathcal{R}\cap\mathcal{O}$, the Lyapunov exponents of the $\Phi_{H}^{t}$-invariant decomposition are equal to the ones of the $D\varphi^{t}_{H}$-invariant decomposition.
\end{lemma}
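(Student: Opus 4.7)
The plan is to transfer the Oseledets decomposition of $D\varphi_H^t$ on $T_xM$ to the fiber $\mathcal{N}_x$ via the projection $\Pi_x$, and then use the subexponential estimates \eqref{angle} and \eqref{angle2} to match the growth rates. I first observe that in the nontrivial Oseledets case the 1-dimensional subspaces $E^\pm_x$ both lie in $T_xH^{-1}(e)=\ker dH(x)$: energy preservation gives $dH\circ D\varphi_H^t = dH$, so $dH(v)$ is constant along the $D\varphi_H^t$-orbit of any $v\in E^\pm_x$; but $\|D\varphi_H^t v\|\to 0$ in negative (resp.\ positive) time forces this constant value to vanish. Since $\mathbb{R}X_H(x)$ has Lyapunov exponent zero and $\lambda^\pm\neq 0$, the direction $E^\pm_x$ is independent from $\mathbb{R}X_H(x)$, and therefore $\Pi_x$ restricts to a linear isomorphism $E^\pm_x\to \mathcal{N}^\pm_x$ inside the orthogonal splitting $T_xH^{-1}(e)=\mathbb{R}X_H(x)\oplus\mathcal{N}_x$.

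Next I would pick a unit vector $v\in E^\pm_x$ and write $v=\alpha X_H(x)+w$ with $w=\Pi_x v\in \mathcal{N}^\pm_x$. Applying the tangent flow and projecting yields
$$
\Phi_H^t w=\Pi_{\varphi_H^t(x)}\circ D\varphi_H^t\,w=\Pi_{\varphi_H^t(x)}\bigl(D\varphi_H^t v-\alpha X_H(\varphi_H^t(x))\bigr)=\Pi_{\varphi_H^t(x)}D\varphi_H^t v,
$$
since $X_H(\varphi_H^t(x))\in\ker\Pi_{\varphi_H^t(x)}$. Writing $\theta_t$ for the angle at $\varphi_H^t(x)$ between $D\varphi_H^t v\in E^\pm_{\varphi_H^t(x)}$ and the flow line $\mathbb{R}X_H(\varphi_H^t(x))$, the orthogonal projection gives
$$
\|\Phi_H^t w\|=\|D\varphi_H^t v\|\,\sin\theta_t.
$$

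Taking logarithms, dividing by $t$, and passing to the limit $t\to\pm\infty$, the first factor contributes $\lambda^\pm(H,x)$ by definition of the Oseledets exponents, while the angle factor contributes $0$ by \eqref{angle2}. Hence
$$
\lim_{t\to\pm\infty}\frac{1}{t}\log\|\Phi_H^t(x)\,w\|=\lambda^\pm(H,x),
$$
proving that $\mathcal{N}^\pm_x$ carries the same Lyapunov exponents for $\Phi_H^t$ as $E^\pm_x$ does for $D\varphi_H^t$. In the trivial Oseledets case \eqref{case 1 OS} the same computation with $\lambda^\pm$ replaced by $0$ (and every nonzero $w\in\mathcal{N}_x$ obtained as $\Pi_x v$ for some $v\in T_xH^{-1}(e)$) shows $\Phi_H^t$ has only zero exponents as well. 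The only delicate point is the non-collapse of $\Pi_x$ on the hyperbolic Oseledets directions, which I expect to be the main (but very mild) obstacle; it is handled above by the exponent comparison with $X_H$.
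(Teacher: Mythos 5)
Your proof is correct and follows essentially the same route as the paper's: decompose a normal vector as $\alpha X_H(x)$ plus a vector in $E^\pm_x$, observe that $\Pi$ annihilates the $X_H$-component along the orbit, use the identity $\|\Pi_{\varphi_H^t(x)}D\varphi_H^t v\| = \sin(\theta_t)\|D\varphi_H^t v\|$, and then kill the angle factor with \eqref{angle2}. You also supply two small details that the paper leaves implicit (that $E^\pm_x\subset\ker dH(x)$ by energy conservation plus subexponential decay, and that $\Pi_x|_{E^\pm_x}$ is injective because of the exponent mismatch with $\mathbb{R}X_H(x)$), which tightens the argument without changing it.
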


\begin{proof}
If the Oseledets' splitting is trivial there is nothing to prove. 
Otherwise, let 
$$
n^+=\alpha X_H(x)+v^+\in{\mathcal{N}^{+}_{x}}
$$
with $v^+\in E_x^+$ and $\alpha\in{\mathbb{R}}$. 
We want to study the asymptotic behavior of $\|\Phi_H^t(x)\, n^+\|$. 
 From the following two equalities
\begin{itemize}
\item
$
\Pi_{\varphi_{H}^{t}(x)}D\varphi_{H}^{t}(x)\, X_H(x)=
\Pi_{\varphi_{H}^{t}(x)} X_H\circ\varphi_{H}^{t}(x)=0$,
\item
$\| \Pi_{\varphi_{H}^{t}(x)}D\varphi_{H}^{t}(x)\, v^+\|=
\sin(\theta_t)\|D\varphi_{H}^{t}(x)\, v^+\|$,
\end{itemize}
we get
$$
\lim_{t\to{\pm{\infty}}}\frac{1}{t}\log{\|\Phi^{t}_{H}(x)\, n^{+}\|}=\lim_{t\to{\pm{\infty}}}\frac{1}{t}
\log \left[\sin(\theta_{t})\|{D\varphi_{H}^{t}(x)}\, v^{+}\|\right],
$$
where $\theta_{t}$ is the angle between $X_{H}\circ\varphi_{H}^{t}(x)$ and $E^{+}_{\varphi^{t}_{H}(x)}$.
By \eqref{angle2}, we obtain
\begin{equation*}
\begin{split}
\lim_{t\to{\pm{\infty}}}\frac{1}{t}
\log \left[\sin(\theta_{t})\|{D\varphi_{H}^{t}(x)}\, v^{+}\|\right]
&=
\lim_{t\to{\pm{\infty}}}\frac{1}{t}\log \|D\varphi_{H}^{t}(x)\, v^{+}\| \\
&=
\lambda^{+}(H,x).
\end{split}
\end{equation*}
We proceed analogously for $\mathcal{N}^{-}_{x}$.
\end{proof}

\medskip

Below we state the Oseledets theorem for the transversal linear Poincar\'{e} flow.

\begin{theorem}
Let $H\in C^2(M,\Rr)$. For $\mu$-a.e. $x\in{M}$
there exists the \emph{upper Lyapunov exponent} 
$$
\lambda^{+}(H,x)=\underset{t\to{+\infty}}{\lim}\frac{1}{t}\log\|\Phi_{H}^{t}(x)\| \geq0
$$
and $x\mapsto \lambda^+(H,x)$ is measurable. For $\mu$-a.e. $x$ with $\lambda^{+}(H,x)>0$, there is a splitting $\mathcal{N}_{x}=\mathcal{N}_{x}^{+}\oplus{\mathcal{N}_{x}^{-}}$ which varies measurably with $x$ such that:
$$
\lim_{t\to\pm\infty}\frac{1}{t}\log\|\Phi_{H}^{t}(x)\, v\| =
\begin{cases}
\lambda^{+}(H,x), & {v}\in{\mathcal{N}_{x}^{+}}\setminus\{{0}\} \\
-\lambda^{+}(H,x), & {v}\in{\mathcal{N}_{x}^{-}}\setminus\{{0}\} \\
\pm \lambda^{+}(H,x), & {v}\notin{\mathcal{N}_{x}^{+}} \cup \mathcal{N}_{x}^{-}
\end{cases}
$$
\end{theorem}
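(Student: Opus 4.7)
The plan is to obtain this theorem as a direct application of the classical Oseledets multiplicative ergodic theorem to the cocycle $\Phi_{H}^{t}$ over the measure preserving flow $(\varphi_{H}^{t},\mu)$, and then to use the symplectic nature of $\Phi_{H}^{t}$ together with Lemma~\ref{equal} to pin down the sign structure of the exponents and the shape of the splitting.

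First I would check the hypotheses of the MET for the linear cocycle $\Phi_{H}^{t}$ on the measurable 2-dimensional symplectic vector bundle $\mathcal{N}_{\mathcal{R}}\to\mathcal{R}$. The open invariant set $\mathcal{R}$ has full $\mu$-measure (critical sets of the Hamiltonians relevant here have measure zero), so the cocycle is defined $\mu$-a.e. Because $M$ is compact and $H\in C^{2}$, the tangent flow $D\varphi_{H}^{1}$ is uniformly bounded, and the orthogonal projection $\Pi$ has operator norm $1$; hence $\|\Phi_{H}^{1}(x)\|$ is uniformly bounded in $x$. Since $\Phi_{H}^{1}(x)\colon\mathcal{N}_{x}\to\mathcal{N}_{\varphi_{H}^{1}(x)}$ is a linear symplectomorphism between $2$-dimensional symplectic fibers, it has determinant $1$, and in this dimension $\|A^{-1}\|=\|A\|/|\det A|=\|A\|$, giving the same uniform bound on the inverse. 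Thus $\log^{+}\|\Phi_{H}^{\pm 1}\|\in L^{1}(\mu)$ and Oseledets' theorem applies, producing at $\mu$-a.e.\ $x$ an invariant measurable splitting of $\mathcal{N}_{x}$ and measurable exponents.

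Next I would exploit the symplectic condition to obtain the claimed sign structure. Because $\dim\mathcal{N}_{x}=2$ and $\Phi_{H}^{t}$ has determinant $1$, the Oseledets identity
$$
\sum_{i}\lambda_{i}(x)\,\dim\mathcal{N}_{x}^{i}
=\lim_{t\to\pm\infty}\frac{1}{t}\log|\det\Phi_{H}^{t}(x)|=0
$$
forces the two exponents to be opposites. Hence either both vanish (the splitting is trivial and $\lambda^{+}(H,x)=0$) or they are $\lambda^{+}(H,x)>0$ and $-\lambda^{+}(H,x)<0$, in which case Oseledets delivers precisely the $\Phi_{H}^{t}$-invariant decomposition $\mathcal{N}_{x}=\mathcal{N}_{x}^{+}\oplus\mathcal{N}_{x}^{-}$ with growth rates $\pm\lambda^{+}(H,x)$ along each one-dimensional line, and with $x\mapsto\lambda^{+}(H,x)$ and $x\mapsto\mathcal{N}_{x}^{\pm}$ measurable. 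Lemma~\ref{equal} identifies this $\lambda^{+}(H,x)$ with the one arising from the tangent Oseledets theorem, which explains the consistency of notation and the non-negativity claim.

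Finally, for a vector $v\in\mathcal{N}_{x}\setminus(\mathcal{N}_{x}^{+}\cup\mathcal{N}_{x}^{-})$, write $v=v^{+}+v^{-}$ with $v^{\pm}\in\mathcal{N}_{x}^{\pm}\setminus\{0\}$. The subexponential angle estimate \eqref{angle2}, which the MET guarantees along Oseledets orbits, prevents cancellation of the two components, so that for large $|t|$
$$
\|\Phi_{H}^{t}(x)v\|\asymp\max\bigl(\|\Phi_{H}^{t}(x)v^{+}\|,\|\Phi_{H}^{t}(x)v^{-}\|\bigr)
$$
up to subexponential factors. Taking $\frac{1}{t}\log$ and letting $t\to+\infty$ yields $\lambda^{+}(H,x)$, and letting $t\to-\infty$ yields $-\lambda^{+}(H,x)$, which is exactly the stated dichotomy. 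The only mildly delicate point in carrying this out is the bookkeeping between the tangent splitting and its image under $\Pi_{x}$, but this is precisely what Lemma~\ref{equal} has already absorbed; beyond that the proof is a direct appeal to the standard MET for symplectic cocycles.
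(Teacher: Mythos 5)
Your argument is correct and is, in substance, what the paper leaves implicit: the theorem is stated with no proof right after Lemma~\ref{equal} precisely because it is meant to follow from the classical multiplicative ergodic theorem combined with that lemma. Where you differ slightly in route is in applying the MET directly to the cocycle $\Phi_H^t$ on the two-dimensional bundle $\mathcal{N}_{\mathcal{R}}$ (and re-verifying integrability and the symplectic pairing of exponents), while the paper's intended path is to take the tangent Oseledets splitting $T_xM=E_x^+\oplus E_x^-\oplus E_x^0\oplus\Rr X_H(x)$ already obtained in section~2.2, project it by $\Pi_x$ to define $\mathcal{N}_x^\pm=\Pi_x(E_x^\pm)$, and let Lemma~\ref{equal} together with the subexponential angle estimate~\eqref{angle2} carry all the remaining content. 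The two constructions of $\mathcal{N}_x^\pm$ agree a.e.\ by uniqueness of the Oseledets splitting when $\lambda^+(H,x)>0$, so the proofs converge.

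One small inaccuracy worth correcting: $\Phi_H^1(x)$ does not literally have determinant $1$ when computed in Riemannian orthonormal frames, so the shortcut $\|A^{-1}\|=\|A\|/|\det A|=\|A\|$ is not quite licit. What the symplectic property gives is preservation of $\omega|_{\mathcal{N}}$, and the Riemannian area form differs from this by a continuous positive density, which is bounded on each compact regular energy surface; hence $\tfrac1t\log|\det\Phi_H^t(x)|\to 0$ along a.e.\ orbit, which is all you need for $\lambda^-=-\lambda^+$. For integrability you can skip determinants entirely: both $\|\Phi_H^{1}(x)\|$ and $\|\Phi_H^{-1}(x)\|$ are bounded by $\sup_M\|D\varphi_H^{\pm1}\|$ since $\Phi_H^{\pm1}$ is a norm-one projection composed with $D\varphi_H^{\pm1}$ restricted to a subspace. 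Neither of these points is a gap in the argument, only a tightening.
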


\begin{subsection}{Hyperbolic structure}
\label{section:hyperb}

Let $H\in C^2(M,\Rr)$. 
Given any compact and $\varphi^{t}_{H}$-invariant set $\Lambda\subset H^{-1}(e)$, we say that $\Lambda$ is a \emph{hyperbolic set} for $\varphi_H^t$ if there exist $m\in{\mathbb{N}}$ and a $D\varphi_{H}^{t}$-invariant splitting $T_\Lambda H^{-1}(e)=E^{+}\oplus E^{-}\oplus E$ such that for all $x\in\Lambda$ we have:
\begin{itemize}
 \item $\|D\varphi^{m}_{H}(x)|_{E^{-}_{x}}\|\leq\frac{1}{2}$ (uniform contraction),
 \item $\|D\varphi^{-m}_{H}(x)|_{E^{+}_{x}}\|\leq\frac{1}{2}$ (uniform expansion)
\item
and $E$ includes the directions of the vector field and of the gradient of $H$.
\end{itemize}
If $\Lambda$ is a regular energy surface, then $\varphi_H^t|_{\Lambda}$ is said to be {\em Anosov} (for simplicity, we often say that $\Lambda$ is Anosov).
Notice that there are no minimal hyperbolic sets larger than energy level sets.

Similarly, we can define a hyperbolic structure for the transversal linear Poincar\'{e} flow $\Phi_H^t$. We say that $\Lambda$ is hyperbolic for $\Phi_H^t$ on $\Lambda$ if $\Phi_H^t|_\Lambda$ is a hyperbolic vector bundle automorphism.
The next lemma relates the hyperbolicity for $\Phi_H^t$ with the hyperbolicity for $\varphi_H^t$. It is an immediate consequence of a result by Doering~\cite{D} for the linear Poincar\'e flow extended to our Hamiltonian setting and the transversal linear Poincar\'e flow.

\begin{lemma}\label{hyperbolic}
Let $\Lambda$ be an $\varphi_{H}^{t}$-invariant and compact set. Then
$\Lambda$ is hyperbolic for $\varphi_{H}^{t}$ iff $\Lambda$ is hyperbolic for $\Phi_H^t$.
\end{lemma}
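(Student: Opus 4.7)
The plan is to derive the equivalence from Doering's result in two steps: first establish that hyperbolicity of $\varphi_H^t$ on $\Lambda\subset H^{-1}(e)$ projects to hyperbolicity of $\Phi_H^t$ on $\NN_\Lambda$, and then that a hyperbolic splitting for $\Phi_H^t$ can be lifted back to an invariant splitting for $D\varphi_H^t$ on $T_\Lambda H^{-1}(e)$. The main tool is the canonical identification $T_x H^{-1}(e)=\Rr X_H(x)\oplus \NN_x$ together with the fact that $\Phi_H^t$ is induced on $T H^{-1}(e)/\Rr X_H$ by the quotient of $D\varphi_H^t$.

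For the easy direction, assume $\Lambda$ is hyperbolic for $\varphi_H^t$ with splitting $T_\Lambda H^{-1}(e)=E^+\oplus E^-\oplus \Rr X_H$. Since $\Pi_x\colon T_x H^{-1}(e)\to \NN_x$ coincides with the canonical projection modulo $\Rr X_H(x)$, the subbundles $\NN^\pm:=\Pi(E^\pm)$ are $\Phi_H^t$-invariant and form a splitting of $\NN_\Lambda$. By compactness of $\Lambda$ and continuity of the Anosov splitting, the angle between $E^\pm$ and $\Rr X_H$ is uniformly bounded away from zero, so $\Pi|_{E^\pm}$ is uniformly bi-Lipschitz. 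The uniform contraction/expansion estimates on $D\varphi_H^m|_{E^\pm}$ therefore translate verbatim (up to a multiplicative constant that can be absorbed by enlarging $m$) into the required estimates for $\Phi_H^m|_{\NN^\pm}$.

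The converse is the technical heart. Given $\NN_\Lambda=\NN^+\oplus \NN^-$ invariant and uniformly hyperbolic under $\Phi_H^t$, one lifts each $v\in \NN^\pm_x$ to a vector $\tilde v=v+a^\pm(v)X_H(x)\in T_x H^{-1}(e)$ by choosing the scalar coefficient $a^\pm(v)$ so that the resulting 1-dimensional bundles $E^\pm$ are $D\varphi_H^t$-invariant. The invariance condition reduces to a cohomological equation
$$
a^\pm(\Phi_H^t v)-a^\pm(v)=\delta(v,t),
$$
where $\delta(v,t)$ is the unique scalar such that $D\varphi_H^t v-\Phi_H^t v=\delta(v,t)\,X_H(\varphi_H^t(x))$. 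Because $\|X_H\|$ is bounded on the compact set $\Lambda$ while $\|\Phi_H^{-n}v\|$ decays exponentially for $v\in\NN^+$ (resp.\ $\|\Phi_H^n v\|$ for $v\in \NN^-$), the formal telescoping series defining $a^+$ by backward iteration (and $a^-$ by forward iteration) converge absolutely and uniformly. This produces the desired $D\varphi_H^t$-invariant splitting $T_\Lambda H^{-1}(e)=E^+\oplus E^-\oplus \Rr X_H$, and the uniform hyperbolicity transfers from $\Phi_H^t$ to $D\varphi_H^t$ because the lift coefficients $a^\pm$ are bounded.

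The main obstacle is therefore the solvability of the cohomological equation in the converse direction; this is precisely where Doering's argument for the linear Poincar\'e flow applies almost verbatim, the only adaptation being that one restricts the lifting procedure to the subbundle $\NN_\Lambda\subset N_\Lambda$, which is automatic since $T H^{-1}(e)$ is $D\varphi_H^t$-invariant and contains $\Rr X_H$. Hence the equivalence stated in Lemma~\ref{hyperbolic} follows immediately.
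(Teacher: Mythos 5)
Your proof is correct and follows exactly the route the paper itself invokes without detail: the paper simply cites Doering's result for the linear Poincar\'e flow and asserts the extension to the Hamiltonian/transversal setting is immediate, and your argument is precisely the unwinding of that citation — projection for the easy direction, and a cohomological equation in the $X_H$-component (solved by a telescoping series that converges because $\|X_H\|$ is bounded away from zero on the compact regular set $\Lambda$ while $\|\Phi_H^{\mp n}|_{\NN^\pm}\|$ decays exponentially) for the converse lift. You also correctly pinpoint the sole Hamiltonian adaptation, namely that the lift lands in the $D\varphi_H^t$-invariant subbundle $TH^{-1}(e)\supset\Rr X_H$ rather than all of $TM$, which is indeed automatic.
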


We end this section with a well-known result about the measure of hyperbolic sets for $C^2$ (or more general $C^{1+}$) dynamical systems, proved by Bowen~\cite{Bowen}, Bochi-Viana~\cite{BV3} and Bessa~\cite{Be} in several contexts.
Here, following~\cite{Be}, it is stated for Hamiltonian functions, meaning a higher differentiability degree.

\begin{lemma}\label{Bowen}
Let $H\in C^3(M,\Rr)$ and a regular energy surface $\EE$.
If $\Lambda\subset\EE$ is hyperbolic, then $\mu_{\mathcal{E}}(\Lambda)=0$ or $\Lambda=\EE$ (i.e. Anosov).
\end{lemma}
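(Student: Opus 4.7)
The plan is to establish Bowen's dichotomy adapted to the Hamiltonian setting: assume $\mu_{\EE}(\Lambda)>0$ and prove $\Lambda=\EE$. The $C^3$ hypothesis on $H$ is used precisely so that $X_H$, and hence the tangent cocycle $D\varphi_H^t$, is at least $C^{1+\mathrm{Lip}}$; this is the minimum regularity under which Pesin's absolute continuity theorem applies to the stable and unstable foliations of $\Lambda$, and it is exactly the point where $C^1$-type counterexamples (e.g.\ Bowen--Ruelle positive-measure hyperbolic Cantor sets) are ruled out.

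First I would use the hyperbolicity of $\Lambda$ inside the $3$-dimensional surface $\EE$: the invariant splitting $E^{+}\oplus E^{-}\oplus\Rr X_H$ from the definition integrates, by Hadamard--Perron, to continuous families of $C^1$ local strong stable and strong unstable manifolds $W^{ss}_{\mathrm{loc}}(x)$ and $W^{uu}_{\mathrm{loc}}(x)$, each one-dimensional in $\EE$ for every $x\in\Lambda$. Together with the orbit direction, they give a local product structure of $\EE$ near $\Lambda$. The crucial technical input, available thanks to the $C^{1+\mathrm{Lip}}$ cocycle, is Pesin's theorem: the holonomies between small transversals along each of these foliations are absolutely continuous with respect to the one-dimensional Lebesgue measures induced on the leaves.

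Next, by the Lebesgue density theorem I would pick a density point $p\in\Lambda$ of $\Lambda$ with respect to $\mu_{\EE}$, and disintegrate $\mu_{\EE}$ in a small flow-box around $p$ as a product of the leafwise measures on strong unstable leaves, strong stable leaves, and along orbits (using that $\omega_{\EE}$ together with the flow direction recovers $\omega^d$). A Fubini-type argument combined with the absolute continuity of the two transverse holonomies forces the density of $\Lambda$ at $p$ to propagate leafwise: for a full-measure set of strong stable leaves near $p$, the intersection with $\Lambda$ is of full linear density, and symmetrically on strong unstable leaves. Composing the two holonomies and using that $\Lambda$ is closed then shows that $\Lambda$ contains an entire neighbourhood of $p$ in $\EE$; in particular $\Int_{\EE}(\Lambda)\neq\emptyset$.

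Finally I would globalise: the hyperbolic splitting over the closed invariant set $\Lambda$ is persistent, so it extends to a $\varphi_H^t$-invariant open neighbourhood $U\supset\Lambda$ in $\EE$ on which the flow is still uniformly hyperbolic. Because $\Lambda$ is closed with non-empty interior and $\EE$ is connected, a standard boundary argument—any boundary point of $\Lambda$ in $\EE$ would carry a strong unstable manifold landing in $\Int_{\EE}(\Lambda)$, contradicting the invariance under $\varphi_H^{-t}$—shows $\Lambda$ is both open and closed in $\EE$, hence $\Lambda=\EE$ and the energy surface is Anosov. The main obstacle is the middle step: turning a Lebesgue density point into genuine interior points requires the full strength of Pesin's absolute continuity and a careful Fubini argument that is compatible with the flow direction being quotiented out when passing from $\mu_{\EE}$ to the leafwise measures.
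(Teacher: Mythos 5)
The paper does not actually prove this lemma; it is stated as a known fact and attributed to Bowen~\cite{Bowen}, Bochi--Viana~\cite{BV3} and Bessa~\cite{Be}, with the $C^3$ hypothesis on $H$ present precisely so that $\varphi_H^t$ is $C^2$ and the classical absolute-continuity machinery applies. Your sketch therefore cannot be ``the paper's proof''; the right comparison is with the arguments in those references, and in that respect your overall scheme --- absolute continuity of the stable/unstable holonomies for a $C^{1+}$ flow, a Lebesgue density point, a Fubini disintegration in a flowbox, local product structure, and then open-plus-closed in the connected surface $\EE$ --- is exactly the right one, and you correctly identify why $C^3$ (rather than $C^2$) is needed.

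Two steps of your sketch, however, do not hold as stated and would have to be repaired. First, the passage from ``for a.e.\ strong-(un)stable leaf near $p$ the intersection with $\Lambda$ has full linear density'' to ``$\Lambda$ contains a whole neighbourhood of $p$'' is not achieved by simply composing holonomies and invoking closedness of $\Lambda$: full density at a point on a leaf is weaker than containing the whole leaf. The standard argument (Bowen, and the versions in~\cite{B,Be}) pushes a density point forward (resp.\ backward) under $\varphi_H^t$, using that the dynamics expands the unstable (resp.\ stable) direction, to promote ``density close to $1$'' to ``density $1$''; only then, passing to a limit point and using that $\Lambda$ is closed, does one obtain a point $z\in\Lambda$ with $W^u_{\mathrm{loc}}(z)\subset\Lambda$ and, symmetrically, $W^s_{\mathrm{loc}}(z)\subset\Lambda$. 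Second, your globalisation step is wrong as written: persistence of hyperbolic sets gives that the \emph{maximal invariant set} in a small neighbourhood of $\Lambda$ is hyperbolic, not that the hyperbolic splitting ``extends to a $\varphi_H^t$-invariant open neighbourhood $U\supset\Lambda$''; no such invariant hyperbolic open neighbourhood exists in general. The boundary argument you append to it (an unstable leaf of a boundary point ``landing'' in $\Int_\EE(\Lambda)$ contradicting backward invariance) does not produce a contradiction either, since backward flow only contracts $W^u$ and makes the boundary point a limit of interior points, which is unobjectionable. The correct conclusion is instead: the iteration argument shows that the closed set $\{x\in\Lambda:\,W^s_{\mathrm{loc}}(x)\cup W^u_{\mathrm{loc}}(x)\subset\Lambda\}$ has full $\mu_\EE$-measure in $\Lambda$, from which one deduces that $\Lambda$ is $s$- and $u$-saturated, hence has local product structure (together with the flow direction), hence is open in $\EE$; being also closed, nonempty and contained in the connected $\EE$, it equals $\EE$.
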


\end{subsection}

\subsection{Dominated splitting}
\label{section:hyperb2}

We now study a weaker form of hyperbolicity. 

\begin{definition}
Let $\Lambda\subset{M}$ be an $\varphi^{t}_{H}$-invariant set and $m\in\Nn$. A splitting of the bundle $\mathcal{N}_\Lambda=\mathcal{N}^{-}_\Lambda\oplus\mathcal{N}^{+}_\Lambda$ is an \emph{$m$-dominated splitting} for the transversal linear Poincar\'{e} flow if it is $\Phi^{t}_{H}$-invariant and continuous such that
\begin{equation}\label{dd}
\frac{\|\Phi^{m}_{H}(x)|\mathcal{N}^{-}_{x}\|}{\|\Phi^{m}_{H}(x)|\mathcal{N}^{+}_{x}\|}\leq{\frac{1}{2}},
\qquad
x\in\Lambda.
\end{equation}
We shall call $\mathcal{N}_\Lambda=\mathcal{N}^{-}_\Lambda\oplus\mathcal{N}^{+}_\Lambda$ a \emph{dominated splitting} if it is $m$-dominated for some $m\in\Nn$.
\end{definition}

If $\Lambda$ has a dominated splitting, then we may extend the splitting to its closure, except to critical points. Moreover, the angle between $\mathcal{N}^{-}$ and $\mathcal{N}^{+}$ is bounded away from zero on $\Lambda$.
Due to our low dimensional assumption, the decomposition is unique. For more details about dominated splitting see~\cite{BDV}.

The above definition of dominated splitting is equivalent to the existence of $C>0$ and $0<\theta<1$ so that
\begin{equation}\label{dd2}
\frac{\|\Phi^{t}_{H}(x)|\mathcal{N}^{-}_{x}\|}{\|\Phi^{t}_{H}(x)|\mathcal{N}^{+}_{x}\|}\leq C\theta^t, 
\qquad
x\in\Lambda,
\quad
t\geq0.
\end{equation}

The proof of the next lemma hints to the fact that the $4$-dimensional setting is crucial in obtaining hyperbolicity from the dominated splitting structure.

\begin{lemma}\label{hyperbolic2}
Let $H\in C^2(M,\Rr)$ and a regular energy surface $\En$.
If $\Lambda\subset \En$ has a dominated splitting for $\Phi_H^t$, then $\overline\Lambda$ is hyperbolic.
\end{lemma}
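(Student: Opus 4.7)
The plan is to exploit the fact that, in this $4$-dimensional setting, the fibers $\mathcal{N}_x$ are $2$-dimensional and $\Phi_H^t(x)\colon\mathcal{N}_x\to\mathcal{N}_{\varphi_H^t(x)}$ is a linear symplectomorphism for the symplectic form induced by $\omega$ on $\mathcal{N}_{\mathcal{R}}$. In two dimensions, symplectic maps are precisely the area-preserving maps, so $\det\Phi_H^t(x)=1$. This is the key ingredient that turns domination into hyperbolicity.

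First I would work on $\Lambda$ itself. Let $\alpha(x)$ be the angle between $\mathcal{N}_x^-$ and $\mathcal{N}_x^+$. As noted right after the definition of dominated splitting, $\alpha$ is bounded away from zero on $\Lambda$. Since the splitting is $\Phi_H^t$-invariant and the flow preserves area on each $\mathcal{N}_x$, one has
\[
\|\Phi_H^t(x)|\mathcal{N}_x^-\|\cdot\|\Phi_H^t(x)|\mathcal{N}_x^+\|\cdot\sin\alpha(\varphi_H^t(x))=\sin\alpha(x),
\]
so the product $\|\Phi_H^t(x)|\mathcal{N}_x^-\|\cdot\|\Phi_H^t(x)|\mathcal{N}_x^+\|$ is bounded above and below by positive constants uniformly in $x\in\Lambda$ and $t\in\Rr$. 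Combining this with the domination estimate \eqref{dd2}, for $t\geq 0$,
\[
\|\Phi_H^t(x)|\mathcal{N}_x^-\|^2
\leq \|\Phi_H^t(x)|\mathcal{N}_x^-\|\cdot\|\Phi_H^t(x)|\mathcal{N}_x^+\|\cdot C\theta^t
\leq C'\theta^t,
\]
so $\|\Phi_H^t(x)|\mathcal{N}_x^-\|\leq\sqrt{C'}\,\theta^{t/2}$, giving uniform contraction on $\mathcal{N}^-$. The symmetric computation with $\Phi_H^{-t}$ yields uniform expansion on $\mathcal{N}^+$. Choosing $m$ large enough that $\sqrt{C'}\,\theta^{m/2}\leq 1/2$, this furnishes the constants required for hyperbolicity of $\Phi_H^t$ on $\Lambda$.

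Next I would extend the conclusion from $\Lambda$ to $\overline{\Lambda}$. The quoted extension property (``a dominated splitting extends to the closure, except at critical points'') applies here because $\Lambda\subset\mathcal{E}$ and $\mathcal{E}$ is a regular energy surface, so $\overline{\Lambda}\subset\mathcal{E}$ contains no critical points of $H$; moreover $\alpha$ is bounded away from zero on $\overline{\Lambda}$ and the inequality \eqref{dd} persists by continuity of $\Phi_H^m$ and of the extended splitting. The preceding argument then applies verbatim to $\overline{\Lambda}$, showing that $\Phi_H^t$ is hyperbolic on $\overline{\Lambda}$. Finally, an application of Lemma~\ref{hyperbolic} transfers hyperbolicity from the transversal linear Poincaré flow to the tangent flow $\varphi_H^t$, establishing that $\overline{\Lambda}$ is hyperbolic.

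The main obstacle I anticipate is bookkeeping the angle factor rigorously: one must check that the area-preservation identity relating the two norms really is uniform on $\Lambda$ (which relies on $\alpha$ being bounded below) and that this uniformity survives the passage to $\overline{\Lambda}$. Once the $2$-dimensional symplectic identity $\|\Phi_H^t|\mathcal{N}^-\|\cdot\|\Phi_H^t|\mathcal{N}^+\|\asymp 1$ is cleanly in hand, the deduction of hyperbolicity is essentially algebraic, and this is precisely where the $4$-dimensional (hence $2$-dimensional normal bundle) hypothesis is used decisively.
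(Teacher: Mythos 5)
Your proof is correct and arrives at the same conclusion, but it rests on a genuinely different invariance principle than the paper's. The paper derives the control on the product $\|\Phi_H^t(x)|\mathcal{N}_x^-\|\cdot\|\Phi_H^t(x)|\mathcal{N}_x^+\|$ from the fact that $\varphi_H^t$ preserves the $3$-dimensional volume $\omega_{\mathcal{E}}$ on the regular energy surface: writing the Jacobian of $D\varphi_H^t|_{T\mathcal{E}}$ in the splitting $\mathbb{R}X_H\oplus\mathcal{N}^-\oplus\mathcal{N}^+$ produces the identity
$\sin\gamma_0\,\|X_H(x)\| = \sin\gamma_t\,\|X_H(\varphi_H^t x)\|\,\|\Phi_H^t|\mathcal{N}_x^+\|\,\|\Phi_H^t|\mathcal{N}_x^-\|$,
which is then bounded using compactness of $\mathcal{E}$ to control $\|X_H\|$ and the lower angle bound $\beta$. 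You instead invoke the symplecticity of the $2$-dimensional transversal linear Poincar\'e flow ($\mathrm{Sp}(2,\mathbb{R})=\mathrm{SL}(2,\mathbb{R})$), which gives the cleaner relation with no $\|X_H\|$ factors at all; this is arguably more intrinsically Hamiltonian, whereas the paper's route is the one inherited from the $3$-dimensional volume-preserving setting of \cite{Be}. One small caveat worth flagging: $\det\Phi_H^t(x)=1$ holds with respect to the induced symplectic form on $\mathcal{N}$, not a priori with respect to the Riemannian area from which $\sin\alpha$ and $\|\cdot\|$ are defined, so your displayed equality is really an equality up to a continuous positive density factor $c(\varphi_H^t x)/c(x)$; this factor is bounded away from $0$ and $\infty$ on the compact set $\overline\Lambda\subset\mathcal{E}$, so your conclusion that the product is uniformly bounded above and below survives unchanged (and the paper's own formula carries an analogous implicit normalization). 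After that, your treatment of the closure and the reduction to Lemma~\ref{hyperbolic} matches the paper.
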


\begin{proof}
Since $\mathcal{E}$ is compact it is at a fixed distance away from critical points, hence there is $K>1$ such that
$$
\frac1K \leq \|X_H(x)\| \leq K,
\qquad
x\in \EE.
$$ 
On the other hand, because $X_H$ is volume-preserving on the $3$-dimensional submanifold $\EE$, we get
\begin{equation}\label{eq cons X}
\sin(\gamma_0)\,\|X_H(x)\| = \sin (\gamma_t)\,\|X_H\circ\varphi_H^t(x)\|\,
\|\Phi_H^t(x)|_{\NN_x^+}\|\,\|\Phi_H^t(x)|_{\NN_x^-}\|.
\end{equation}
Here $\gamma_t$ is the angle between the subspaces $\NN^-$ and $\NN^+$ at $\varphi_H^t(x)$, which is bounded from below by some $\beta>0$ for any $x\in\overline\Lambda$.
We can now rewrite \eqref{eq cons X} as
\begin{equation*}
\begin{split}
\|\Phi_H^t(x)|_{\NN_x^-}\|^2
& =
\frac{\sin(\gamma_0)}{\sin(\gamma_t)} 
\frac{\|X_H(x)\|}{\|X_H\circ\varphi_H^t(x)\|}
\frac{\|\Phi_H^t(x)|_{\NN_x^-}\|}{\|\Phi_H^t(x)|_{\NN_x^+}\|} \\
&\leq
K^2 \frac{\sin(\gamma_0)}{\sin(\beta)} C\theta^t,
\end{split}
\end{equation*}
where we also have used \eqref{dd2}. Thus we have uniform contraction on $\NN_x^-$.

The above procedure can be adapted for $\NN_x^+$ to find uniform expansion, hence $\overline\Lambda$ is hyperbolic for $\Phi_H^t$. Lemma~\ref{hyperbolic} concludes the proof.
\end{proof}

Combining Lemmas \ref{Bowen} and \ref{hyperbolic2} we get the following.

\begin{proposition}\label{Bowen2}
Let $H\in C^3(M,\Rr)$ and a regular energy surface $\En$.
If $\Lambda\subset\EE$ has a dominated splitting for $\Phi_H^t$, then $\mu_{\En}(\Lambda)=0$ or $\En$ is Anosov.
\end{proposition}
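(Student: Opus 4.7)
The plan is to chain the two lemmas immediately preceding the proposition, with only a small amount of closure-taking and invariance bookkeeping in between. Specifically, the dominated splitting hypothesis on $\Lambda$ is handed to Lemma~\ref{hyperbolic2} to produce hyperbolicity of $\overline\Lambda$, and then Lemma~\ref{Bowen} is applied to $\overline\Lambda$ to conclude the dichotomy. Passing to $\Lambda$ in the measure-zero alternative is automatic since $\Lambda\subset\overline\Lambda$.

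In more detail, I would first note that, because $\Lambda$ is $\varphi_H^t$-invariant and the flow is continuous, the closure $\overline\Lambda$ is also $\varphi_H^t$-invariant; it is compact because $\EE$ is compact and $\overline\Lambda\subset\EE$. The remark in section~\ref{section:hyperb2} that the dominated splitting extends continuously to the closure away from critical points is available here with no caveat, since $\EE$ is a regular energy surface and so contains no critical points of $H$. Hence Lemma~\ref{hyperbolic2} applies verbatim and gives that $\overline\Lambda$ is hyperbolic for $\Phi_H^t$, which by Lemma~\ref{hyperbolic} is the same as hyperbolicity for $\varphi_H^t$.

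Next I would invoke Lemma~\ref{Bowen}, which requires $C^3$ regularity (this is the reason for the $C^3$ hypothesis in the proposition): applied to the hyperbolic invariant compact set $\overline\Lambda\subset\EE$, it yields either $\mu_\EE(\overline\Lambda)=0$ or $\overline\Lambda=\EE$. In the first case, monotonicity of $\mu_\EE$ gives $\mu_\EE(\Lambda)\le \mu_\EE(\overline\Lambda)=0$, as required. In the second case, $\EE$ itself is a hyperbolic set for $\varphi_H^t$, which is by definition the Anosov property for the energy surface $\EE$.

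There is essentially no obstacle here: all the work has been packaged into Lemmas~\ref{hyperbolic2} and~\ref{Bowen}, and the proposition is their formal concatenation. The only point one must be careful about is ensuring that the upgrade from $\Lambda$ to $\overline\Lambda$ is legitimate, which is why I would explicitly record invariance of the closure and the absence of critical points on $\EE$ before quoting the two lemmas.
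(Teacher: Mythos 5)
Your argument is exactly the paper's: the proposition is stated there as an immediate combination of Lemma~\ref{hyperbolic2} (dominated splitting on $\Lambda$ gives hyperbolicity of $\overline\Lambda$) and Lemma~\ref{Bowen} (hyperbolic subset of a regular energy surface has zero $\mu_\EE$-measure or is all of $\EE$). Your extra bookkeeping about invariance of the closure and the absence of critical points on $\EE$ is correct and harmless, but the route is the same.
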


In particular, there is a $C^2$-dense set of $C^2$-Hamiltonians for which the above holds.

\begin{remark}
It is an open problem to decide whether for every $H\in C^3(M,\Rr)$ the following holds: an invariant set $\Lambda$ containing critical points of $H$ and admitting a dominated splitting can only be of zero measure or Anosov.
\end{remark}

\end{section}

\begin{section}{Proof of the main theorems}
\label{section:Proof of the main theorems}

\subsection{Integrated Lyapunov exponent}

Let $H\in C^{2}(M,\mathbb{R})$. We take any measurable $\varphi^{t}_{H}$-invariant subset $\Gamma$ of $M$ and we define the integrated upper Lyapunov exponent over $\Gamma$ by
\begin{equation}\label{def LE}
\LE(H,\Gamma)=\int_{\Gamma}\lambda^{+}(H,x)\,d\mu(x).
\end{equation}

The sequence 
$$
a_n(H)=\int_\Gamma\log\|\Phi^{n}_{H}(x)\|\,d\mu(x)
$$
is subaditive ($a_{n+m}\leq a_n+a_m$), hence $\lim\frac{a_n(H)}n=\inf\frac{a_n(H)}n$.
That is,
\begin{equation}\label{infimum}
\LE(H,\Gamma)=\inf_{n\geq{1}}\frac{1}{n}\int_{\Gamma}\log\|\Phi^{n}_{H}(x)\|\,d\mu(x). 
\end{equation}
Since $H\mapsto\frac{1}{n}\int_{\Gamma}\log\|\Phi^{n}_{H}(x)\|d\mu(x)$ is continuous for each $n$, we conclude that $\LE(\cdot,\Gamma)$ is upper semicontinuous among $C^2$ Hamiltonians having a common invariant set $\Gamma$.


\subsection{Decay of Lyapunov exponent}

For a given Hamiltonian $H\in C^2(M,\Rr)$ and $m\in\Nn$, we define the open set 
$$
\Gamma_{m}(H)=M\setminus D_{m}(H),
$$ 
where $D_{m}(H)$ is the invariant set with $m$-dominated splitting for $\Phi_H^t$. 
This means that $\Gamma_{m}(H)$ is the set of points absent of $m$-dominated splitting.
Furthermore, there exists $\tilde{m}\in\mathbb{N}$ such that for all $m'\geq\tilde{m}$ we have $\Gamma_{m'}(H)\subset\Gamma_m(H)$.
On the other hand, if $H'=H$ on $D_m(H)$, then $\Gamma_m(H')\subset\Gamma_m(H)$. The equivalent relations for $D_m(H)$ are immediate.

The next proposition is fundamental because it allows us to decay the integrated Lyapunov exponent over a full measure subset of $\Gamma_{m}(H)$.

\begin{proposition}\label{main}
Let $H\in C^{s+1}(M,\mathbb{R})$ with $s\geq2$ or $s=\infty$, and $\epsilon,\delta>0$. Then there exists $m\in\Nn$ and $\widetilde{H}\in C^s(M,\Rr)$, $\epsilon$-$C^{2}$-close to $H$, such that $\widetilde H=H$ on $D_m(H)$ and
\begin{equation}\label{dec LE}
\LE(\widetilde H,\Gamma_{m}(H))<\delta.
\end{equation}
\end{proposition}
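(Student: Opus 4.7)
\medskip

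\noindent\textbf{Proof proposal for Proposition \ref{main}.}

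The plan is to follow the Bochi--Ma\~n\'e--Bessa scheme adapted to the Hamiltonian setting, exploiting the expression \eqref{infimum} for $\LE(H,\Gamma)$ as an infimum of continuous functionals. Since $\Gamma_m(H)$ is the set where no $m$-dominated splitting is available for $\Phi_H^t$, the heuristic is that along typical orbits contained in $\Gamma_m(H)$ one can detect, for arbitrarily long time windows, pairs of directions in $\NN$ whose images under $\Phi_H^n$ become asymptotically close in angle. These near-tangencies are the places at which a small, carefully chosen Hamiltonian perturbation can act so as to rotate the weaker Oseledets direction into the stronger one, thereby producing cancellation in $\frac{1}{n}\int_{\Gamma_m(H)}\log\|\Phi^n_{\widetilde H}(x)\|\,d\mu(x)$.

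The first step is to produce a local perturbation lemma (this is the genuinely new ingredient over the volume-preserving case in \cite{Be}): given $x\in\Gamma_m(H)$, a small flowbox $U$ around $x$ disjoint from $D_m(H)$, a symplectic Darboux chart \eqref{canonical symplectic form}, and a prescribed linear symplectic rotation $R$ of $\NN_x$ close to the identity, I would construct $\widetilde H=H+u$ with $u\in C^s$, supported in $U$, $\epsilon$-$C^2$-close to $0$, constant on level sets of $H$ in a neighborhood of the orbit (so that the flows share the same energy foliation locally), and such that the transversal linear Poincar\'e map of $\widetilde H$ along the chosen orbit arc realises $R$ on $\NN_x$. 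The construction uses a bump function times a quadratic form in the transversal Darboux coordinates tuned to reproduce $R$ at the first return; the $C^2$-smallness of $u$ forces $R$ to differ from the identity only by a controllably small symplectic rotation, and the fact that $u$ vanishes on $D_m(H)$ follows from making $U$ avoid that closed invariant set.

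The second step is to combine this with the abstract argument of \cite{B,Be}. Using Oseledets's theorem for $\Phi_H^t$ and Lusin-type compactness, I would choose $n$ so large that (i) $\frac{1}{n}\int_{\Gamma_m(H)}\log\|\Phi_H^n(x)\|\,d\mu(x)$ is within $\delta/2$ of $\LE(H,\Gamma_m(H))$, and (ii) on most $x\in\Gamma_m(H)\cap\mathcal{O}(H)$ with $\lambda^+(H,x)>0$ the obstruction to $m$-domination manifests, for appropriately large $m$, as a chain of iterates along which $\NN^+$ and $\NN^-$ form an angle $\leq C/m$. At each such $x$ one uses the perturbation lemma inside a tiny flowbox to replace $R$ by a rotation sending $\NN^-_x$ onto $\NN^+_x$. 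A Vitali/Kakutani-tower covering argument then stitches these disjointly-supported perturbations over $\Gamma_m(H)$ into a single $\widetilde H$, $\epsilon$-$C^2$-close to $H$ and agreeing with $H$ on $D_m(H)$, while guaranteeing that the resulting $\log\|\Phi^n_{\widetilde H}(x)\|$ is pointwise almost everywhere smaller than $\log\|\Phi^n_H(x)\|$ by a definite fraction on a set of almost full measure. By \eqref{infimum} this yields $\LE(\widetilde H,\Gamma_m(H))<\delta$.

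The hard part will be the local Hamiltonian perturbation lemma: unlike the volume-preserving setting, the perturbation must be realised through a scalar function $u$, not directly at the level of the vector field, and it must simultaneously (a) fit inside a flowbox disjoint from $D_m(H)$, (b) be $C^2$-small, (c) preserve the energy surface structure near the base orbit, and (d) implement a prescribed symplectic rotation on the two-dimensional transversal fiber $\NN_x$. Balancing (b) against the amplitude of the rotation constrains the number of times one has to iterate the perturbation along an orbit, which is precisely where the quantitative lack of $m$-domination in $\Gamma_m(H)$ enters. Once this lemma is in place, the remainder of the argument reduces, as indicated at the end of the introduction, to the abstract construction already carried out in \cite{Be}.
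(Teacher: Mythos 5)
Your overall architecture --- a local perturbation lemma realizing a small symplectic rotation on $\NN_x$, concatenated along orbit segments where $m$-domination fails (cf.\ Lemma~\ref{exchange}), and then stitched over $\Gamma_m(H)$ via Kakutani towers --- is exactly the paper's scheme (Sections~\ref{perturbations} and \ref{end}). But the local lemma as you state it would fail. You require $u=\widetilde H-H$ to be ``constant on level sets of $H$ in a neighborhood of the orbit (so that the flows share the same energy foliation locally).'' Near a regular orbit segment this forces $u=f\circ H$ for a scalar function $f$, hence $d\widetilde H=(1+f'\circ H)\,dH$ and $X_{\widetilde H}=(1+f'\circ H)\,X_H$. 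Since $1+f'\circ H$ is constant along $H$-orbits, the perturbed flow is merely a time reparametrization of the original one: the orbits, the Poincar\'e sections, and the derivatives of the Poincar\'e return maps all coincide with those of $H$, so $\Phi_{\widetilde H}$ evaluated at a return time equals $\Phi_H$ and no rotation of $\NN_x$ is produced. Preserving the local energy foliation is neither needed nor possible here; the paper's basic perturbation in flowbox coordinates is $H(y)=y_3-\alpha\,\ell(y_1)\,\widetilde\ell(y_3)\,\phi(\rho)$ with $\rho=\sqrt{y_2^2+y_4^2}$ (Lemma~\ref{basic}), which depends on $y_1$ and $\rho$ and therefore genuinely distorts the local level sets. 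What is actually imposed is that the perturbation be supported in a flowbox contained in $\Gamma_m(H)$ (so $\widetilde H=H$ on $D_m(H)$), and that $DX_{\widetilde H}=DX_H$ on the ends of the flowbox so the glueing is $C^1$ (conditions (2a) and (2b) of Definition~\ref{rlf}).

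A second, related omission: you invoke ``a symplectic Darboux chart,'' but a Darboux chart normalizes only $\omega$, not the vector field, and does not put the perturbation in a form you can write down. What the paper needs is the stronger symplectic flowbox coordinates of Theorem~\ref{robinson} (an improved version of a lemma of Robinson): a local symplectomorphism $g$ with $g^*\omega_0=\omega$ \emph{and} $H=H_0\circ g$, which simultaneously normalizes the form and straightens the flow to $\partial/\partial y_1$. This is the genuinely nontrivial preparatory ingredient, and it is also where the differentiability loss occurs --- $g$ is only $C^{s}$ when $H\in C^{s+1}$ --- which is precisely why Proposition~\ref{main} takes $H\in C^{s+1}$ and produces $\widetilde H\in C^s$. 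Your proposal never explains that loss, although the statement you are proving requires it; this is also the obstruction, noted at the end of Section~\ref{section: proof of thm 2}, to upgrading Theorem~\ref{teorema2} from dense to residual.
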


We assume that $\LE(H,\Gamma_m(H))>0$,
otherwise the claim holds trivially.
We postpone the proof of this proposition to section~\ref{end} and complete the ones of our main results.

\subsection{Proof of Theorem~\ref{teorema22}}

Here we look at the product set 
$$
\mathcal M=M\times C^2(M,\Rr)
$$
endowed with the standard product topology.
Given a point $p$ on the manifold $M$, we denote by $\EE_p(H)$ the energy surface in $H^{-1}(H(p))$ passing through $p$.
The subset 
$$
A=\{(p,H)\in \mathcal M \colon \EE_p(H)\text{ is an Anosov regular energy surface}\}
$$
is open by structural stability of Anosov systems.
Moreover, for each $(p,H)\in A$ there is a tubular neighbourhood of $\EE_p(H)$ in $M$, consisting of regular energy surfaces supporting Anosov flows.

On the complement of the closure of $A$, denoted by 
$$
B=\mathcal M \setminus \overline A,
$$
there is a continuous positive function 
$$
\eta\colon B\to \Rr^+
$$
guaranteeing for $(p,H)\in B$ that $\VV_{p,H}$ is a connected component of 
$$
\{x\in M\colon |H(x)-H(p)|<\eta(p,H)\}
$$
containing $p$ and made entirely of non-Anosov energy surfaces.

Now, for each $k\in\Nn$ write 
$$
A_k=\left\{(p,H)\in B \colon 
\LE(H,\VV_{p,H}) <\frac1k\right\}.
$$
This is an open set because the function in its definition is upper semicontinuous.

\begin{lemma}
$A_k$ is dense in $B$.
\end{lemma}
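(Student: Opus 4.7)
The plan is, given $(p_0,H_0)\in B$ and $\epsilon_0>0$, to exhibit $\widetilde H\in C^2(M,\Rr)$ with $d_{C^2}(\widetilde H,H_0)<\epsilon_0$ and $(p_0,\widetilde H)\in A_k$ (keeping $p_0$ fixed). The whole argument combines Proposition~\ref{main}, which decays the integrated upper Lyapunov exponent over the ``no-dominated-splitting'' set $\Gamma_m(H)$, with Proposition~\ref{Bowen2}, which forces the complementary set $D_m(H)$ to meet each non-Anosov regular energy surface in a $\mu_\EE$-null set.

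First I would reduce to the $C^3$ case: since $B$ is open and $C^3(M,\Rr)$ is $C^2$-dense in $C^2(M,\Rr)$, I pick $H\in C^3(M,\Rr)$ with $d_{C^2}(H,H_0)<\epsilon_0/2$ and $(p_0,H)\in B$. By the very definition of $B$, every regular energy surface $\EE\subset\VV_{p_0,H}$ is non-Anosov, so Proposition~\ref{Bowen2} gives $\mu_\EE(D_m(H)\cap\EE)=0$ for each such $\EE$ and each $m\in\Nn$. Disintegrating $\mu$ along the energy parameter on $\VV_{p_0,H}$ — which consists of regular energy surfaces since $\eta(p_0,H)>0$ confines us away from critical values — yields $\mu(D_m(H)\cap\VV_{p_0,H})=0$, hence $\VV_{p_0,H}\subset\Gamma_m(H)$ modulo a null set.

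Next I would invoke Proposition~\ref{main} with $\delta:=1/(2k)$ and some small $\epsilon>0$ to obtain $m\in\Nn$ and $\widetilde H\in C^2(M,\Rr)$, $\epsilon$-$C^2$-close to $H$, with $\widetilde H=H$ on $D_m(H)$ and $\LE(\widetilde H,\Gamma_m(H))<1/(2k)$. Since the perturbation is supported off $D_m(H)$, both $D_m(H)$ and $\Gamma_m(H)$ remain $\varphi^{t}_{\widetilde H}$-invariant. Writing $\VV':=\VV_{p_0,\widetilde H}$, which is well-defined because $B$ is open, and splitting $\VV'=(\VV'\cap\Gamma_m(H))\sqcup(\VV'\cap D_m(H))$, additivity of $\LE$ on disjoint invariant sets gives
$$
\LE(\widetilde H,\VV')\leq \LE(\widetilde H,\Gamma_m(H))+\int_{\VV'\cap D_m(H)}\lambda^+(\widetilde H,x)\,d\mu.
$$

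The main obstacle is controlling the second term. On $D_m(H)$ one has $\widetilde H=H$, so the integrand coincides with $\lambda^+(H,\cdot)$ and is uniformly bounded on $M$; it remains to show that $\mu(\VV'\cap D_m(H))\to 0$ as $\epsilon\to 0$. Here I would exploit the continuity of $\eta\colon B\to\Rr^+$ and of the evaluation $H\mapsto H(p_0)$: as $\widetilde H\to H$ in $C^2$ the sublevel set $\{|\widetilde H-\widetilde H(p_0)|<\eta(p_0,\widetilde H)\}$ converges to $\{|H-H(p_0)|<\eta(p_0,H)\}$, so the connected components containing $p_0$ satisfy $\mu(\VV'\triangle\VV_{p_0,H})\to 0$; combined with $\mu(\VV_{p_0,H}\cap D_m(H))=0$ from the first step, this forces $\mu(\VV'\cap D_m(H))\to 0$. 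Choosing $\epsilon$ small enough that the second term drops below $1/(2k)$ yields $\LE(\widetilde H,\VV')<1/k$, i.e.\ $(p_0,\widetilde H)\in A_k$, while $d_{C^2}(\widetilde H,H_0)<\epsilon+\epsilon_0/2<\epsilon_0$, completing the density argument.
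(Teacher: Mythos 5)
Your route is essentially sound and reaches the same conclusion, but it differs from the paper's in one structurally important place, and it contains one incorrect assertion worth flagging.

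The paper applies Proposition~\ref{Bowen2} to the \emph{perturbed} Hamiltonian: it first invokes Proposition~\ref{main} to produce $H$ close to $\widehat H$ with $H=\widehat H$ on $D_m(\widehat H)$ and $\LE(H,\Gamma_m(\widehat H))<\delta$, and then argues that $\mu(\Gamma_m(H)\cap\VV_{p,H})=\mu(\VV_{p,H})$ directly — any $\EE\subset\VV_{p,H}$ with $\mu_\EE(D_m(H)\cap\EE)>0$ would be Anosov by Proposition~\ref{Bowen2}, contradicting $(p,H)\in B$. This gives $\VV_{p,H}\subset\Gamma_m(H)\subset\Gamma_m(\widehat H)\bmod 0$ and the estimate drops out in one line. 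You instead apply Proposition~\ref{Bowen2} to the \emph{unperturbed} $C^3$ Hamiltonian, then split $\VV'=\VV_{p_0,\widetilde H}$ into $\VV'\cap\Gamma_m(H)$ and $\VV'\cap D_m(H)$ and control the second term via $\mu(\VV'\triangle\VV_{p_0,H})\to0$ as $\epsilon\to0$. This is a genuinely different order of operations. It buys you something the paper's wording does not make transparent: Proposition~\ref{Bowen2} is stated for $H\in C^3$, but the perturbed $H$ produced by Proposition~\ref{main} is a priori only $C^2$ when $\widehat H\in C^3$, so the paper's direct application of Proposition~\ref{Bowen2} to the perturbed system is only clean if one starts from higher regularity (say $C^\infty$ Morse). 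Your variant sidesteps this by only invoking Proposition~\ref{Bowen2} at the level where $C^3$ is guaranteed, at the cost of the extra stability estimate on $\VV_{p_0,\cdot}$ (for which one should note that $\partial\VV_{p_0,H}$ is contained in finitely many level sets, hence $\mu$-null, so the measure-convergence claim is justified; one also should observe that $\lambda^+(\widetilde H,\cdot)=\lambda^+(H,\cdot)$ on $D_m(H)$ because the perturbation is supported in $\Gamma_m(H)$ and leaves the flow on $D_m(H)$ unchanged).

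There is one genuine error in your write-up: the claim that ``$\eta(p_0,H)>0$ confines us away from critical values'' is not what $\eta$ does. As defined in the paper, $\eta$ only guarantees that $\VV_{p,H}$ is made of \emph{non-Anosov} energy surfaces; it says nothing about regularity, and Proposition~\ref{Bowen2} needs regular energy surfaces. The paper handles this by restricting to ($C^3$) Morse Hamiltonians, which have finitely many critical values, so that the singular energy surfaces in $\VV_{p,H}$ form a $\mu$-null set and the disintegration argument still goes through mod~$0$. You should replace ``pick $H\in C^3$'' by ``pick $H\in C^3$ Morse'' (or $C^\infty$ Morse) and then argue regularity of a.e.\ energy surface from finiteness of critical values, rather than from $\eta$; for a general $C^3$ function on a $4$-manifold even Sard's theorem is not available at that regularity, so the Morse reduction is not optional.
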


\begin{proof}
Let $(p,H')\in B$. We want to find an arbitrarly close pair $(p,H)$ in $A_k$.
Notice that we will not need to approximate on the first component, the point on the manifold, but only on the Hamiltonian.

Denote the set of $C^3$ Morse functions on $M$ by $K$. Since Morse functions are $C^2$-dense and have a finite number of critical points, it is sufficient to prove the claim by restricting to $(M\times K)\cap B$.
Moreover, small perturbations of a Hamiltonian in $K$ will have regular energy surfaces through $p$. Therefore, we have a dense subset $D\subset M\times K$ in $B$ such that $\EE_p(H)$ is regular for $(p,H)\in D$ and away from Anosov. This means that in fact we only need to show the claim for $D$.

Let $(p,\widehat H)\in D$, and $\varepsilon>0$ such that $(p, H) \in B$ for any $H$ that is $\varepsilon$-$C^2$-close to $\widehat H$.
Proposition~\ref{main} guarantees that for all $\delta>0$ we can find $H\in C^2(M,\Rr)$ which is $\varepsilon$-$C^2$-close to $\widehat H$ and satisfies
$H=\widehat H$ on $D_m(\widehat H)$ (hence $\Gamma_m(H)\subset\Gamma_m(\widehat H)$) and 
$$
\LE(H,\Gamma_m(\widehat H))<\delta. 
$$

Notice that $\mu(\Gamma_m(H)\cap\VV_{p,H})=\mu(\VV_{p,H})$ for all $m\in\Nn$.
Otherwise, if there was an energy surface $\EE\subset\VV_{p,H}$ and $m\in\Nn$ such that
$\mu_{\EE}(D_m(H)\cap\EE)>0$, by Proposition~\ref{Bowen2} it would be Anosov, thus contradicting that $(p,H)\in B$.

Therefore, since the upper Lyapunov exponent is non-negative,
\begin{equation}
\begin{split}
\LE(H,\VV_{p,H}) 
&=
\LE(H,\Gamma_m(H)\cap\VV_{p,H}) \\
& \leq
\LE(H,\Gamma_m(\widehat H))<\delta.
\end{split}
\end{equation}
The choice $\delta=1/k$ yields $(p,H)\in A_k$.
\end{proof}

 From the above, $A\cup A_k$ is open and dense.
Finally, 
\begin{equation*}
\begin{split}
\mathfrak{A}
&=
\bigcap_{k\in\Nn}(A\cup A_k) =
A\cup \bigcap_{k\in\Nn}A_k \\
&=
A\cup \left\{(p,H)\in B \colon \int_{\VV_{p,H}}\lambda^+(H,x)\,d\mu(x)=0\right\}
\end{split}
\end{equation*}
is residual.
By~\cite{BF} Proposition A.7, we can thus write 
$$
\mathfrak{A}=\bigcup_{H\in\mathfrak{R}}\mathfrak{M}_H\times \{H\},
$$
where $\mathfrak{R}$ is $C^2$-residual in $C^2(M,\Rr)$ and, for each $H\in\mathfrak{R}$, $\mathfrak{M}_H$ is a residual subset of $M$, having the following property:
if $H\in\mathfrak{R}$ and $p\in\mathfrak{M}_H$, then $\EE_p(H)$ is Anosov or 
$$
\int\int\lambda^+d\mu_{\EE}dH=0.
$$
The latter implies that $dH$-a.e. the Lyapunov exponents on each energy surface $\EE$ in $\VV$ are $\mu_{\EE}$-a.e. equal to zero.
Recall that we can split the measure $\mu$ into $\mu_\EE$ on the energy surfaces and $dH$ corresponding to the $1$-form transversal to $\EE$.

Therefore, for a $C^2$-generic $H$, in a neighbourhood of a generic point in $M$ we have the above dichotomy, thus being valid everywhere in the manifold. That completes the proof of Theorem~\ref{teorema22}.

\subsection{Proof of Theorem~\ref{teorema2}}\label{section: proof of thm 2}

It is enough to show that we can arbitrarly $C^2$-approximate any $H\in C^\infty(M,\Rr)$ by $H'\in C^2(M,\Rr)$ satisfying 
$$
LE(H',Z)=0
$$
for some $Z$ to be determined, without domination and whose $\mod0$-complement is dominated.
We use an inductive scheme built on~\eqref{dec LE} and the fact that $\LE(\cdot,\Gamma)$ is an upper semicontinuous function among Hamiltonians having a common invariant set $\Gamma$, to define a convenient sequence $H_n\in C^\infty(M,\Rr)$ with $C^2$-limit $H'$.

Choose a sequence $\epsilon_n\leq\epsilon_0 2^{-n}$ (to be further specified later) for some $\epsilon_0>0$.
By Proposition~\ref{main} we construct the sequence of Hamiltonians $H_n$ in the following way:
\begin{enumerate}
\item
$H_0=H$,
\item
$H_n$ and $H_{n-1}$ are $\epsilon_n$-$C^2$-close,
\item
$H_n=H_{n-1}$ on $D_{m_n}(H_{n-1})$,
\item
$\LE(H_n,\Gamma_{m_n}(H_{n-1}))\leq 2^{-n}$.
\end{enumerate}
That is, each term $H_n$ of the sequence is the perturbation of the previous one $H_{n-1}$ as given by Proposition~\ref{main}.
Then, the $C^2$-limit $H'$ exists and is $\epsilon_n$-$C^2$-close to any $H_n$.

 For each $n$ and an invariant set $\Gamma$ for $H_n$, because $\LE(\cdot,\Gamma)$ is upper semicontinuous, for any $\theta>0$ we can find $\eta_n>0$ such that 
$$
\LE(H_*,\Gamma) \leq (1+\theta)\,\LE(H_n,\Gamma)
$$
as long as $H_n$ and $H_*$ are $\eta_n$-$C^2$-close and have the common invariant set $\Gamma$.

Impose now additionally that $\epsilon_n<\eta_n$. So, for any $n$, 
\begin{equation*}
\begin{split}
\LE(H',\cap_i\Gamma_{m_i}(H_{i-1})) 
&\leq 
\LE(H',\Gamma_{m_n}(H_{n-1})) \\
&\leq
(1+\theta) \LE(H_n,\Gamma_{m_n}(H_{n-1})) \\
&\leq
(1+\theta) 2^{-n}.
\end{split}
\end{equation*}
Therefore, $\LE(H',\cap_{i}\Gamma_{m_i}(H_{i-1}))=0$ and the Lyapunov exponents vanish on
$$
Z=\bigcap\limits_{i\in\Nn}\Gamma_{m_i}(H_{i-1})\pmod 0.
$$

Consider an increasing subsequence $m_{n_k}$.
The complementary set of $\cap_{i}\Gamma_{m_{n_i}}(H_{n_i-1})$ is
$$
D=\bigcup\limits_{i\in\Nn}D_{i},
\quad\text{where}\quad
D_{i}=D_{m_{n_i}}(H_{n_i-1}).
$$
By the inductive scheme above, $D_i\subset D_{i+1}$ and $H'=H_{n_i}$ on $D_i$.
So, $H'$ has an $m_{n_i}$-dominated splitting on $D_{i}$.

Finally, we would like to explain why, unfortunately, the strategy in \cite{BV2} to obtain \emph{residual} instead of \emph{dense} in the hypothesis of Theorem \ref{teorema2}, does not apply in our case. 
We start with a $C^2$ Hamiltonian which is a continuity point of the upper semicontinuous function $H\mapsto \LE(H,M)$ (it is well-known that the set of points of continuity is residual) and define the \emph{jump} (see \cite{BV2} p. 1467) by $\LE(H,\Gamma_{\infty}(H))$, where $\Gamma_{\infty}(H)=\cap_{m}\Gamma_{m}(H)$. 
A continuity point means a zero jump, so that $\lambda^{+}(H,x)=0$ for a.e. $x\in \Gamma_{\infty}(H)$ or else $\Gamma_{\infty}(H)$ has zero measure.
Now, in order to estimate a lower bound for the jump, we will need to perturb the original Hamiltonian $H$ as done in section \ref{perturbations}. 
But Theorem \ref{robinson} becomes useless if
$H$ is $C^2$, because the conjugacy symplectomorphism will only be $C^1$.
Finally, we should note that $C^3(M,\mathbb{R})$ equipped with
the $C^2$-topology is not a Baire space, thus residual sets can be
meaningless.

\end{section}

\begin{section}{Perturbing the Hamiltonian}\label{perturbations}

\begin{subsection}{A symplectic straightening-out lemma}

Here we present an improved version of a lemma by Robinson~\cite{R2} that provides us with symplectic flowbox coordinates useful to perform local perturbations to our original Hamiltonian.

Consider the canonical symplectic form on $\Rr^{2d}$ given by $\omega_0$ as in \eqref{canonical symplectic form}.
The Hamiltonian vector field of any smooth $H\colon\Rr^{2d}\to\Rr$ is then 
$$
X_H=\left[\begin{matrix}0&I\\-I&0\end{matrix}\right]\nabla H,
$$
where $I$ is the $d\times d$ identity matrix.
Let the Hamiltonian function $H_0\colon\Rr^{2d}\to\Rr$ be given by $y\mapsto y_{d+1}$, so that 
$$
X_{H_0}=\frac{\partial}{\partial y_1}.
$$

\begin{theorem}[Symplectic flowbox coordinates]\label{robinson}
Let $(M^{2d},\omega)$ be a $C^s$ symplectic manifold, a Hamiltonian $H\in C^s(M,\Rr)$, $s\geq2$ or $s=\infty$, and $x\in M$. If $x\in\mathcal{R}(H)$, there exists a neighborhood $U\subset M$ of $x$ and a local $C^{s-1}$-symplectomorphism $g\colon (U,\omega)\to(\Rr^{2d},\omega_0)$ such that $H=H_0\circ g$ on $U$.
\end{theorem}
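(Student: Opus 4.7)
The plan is to construct coordinates adapted simultaneously to the symplectic form and to the function $H$, following the Carath\'eodory--Jacobi--Lie strategy: declare $y_{d+1}=H$ as one canonical ``momentum'' coordinate, construct a conjugate ``position'' coordinate $y_1$ as a flow-time function, and then fill in the remaining $2d-2$ coordinates by applying the standard Darboux theorem on a symplectic slice transverse to the span of $X_H$ and $X_{y_1}$.

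Concretely, since $x\in\mathcal{R}(H)$ and $\omega$ is nondegenerate, $X_H(x)\neq 0$. I would pick a $C^s$ hypersurface $\Sigma$ through $x$ transverse to $X_H$; the map $(t,z)\mapsto\varphi_H^t(z)$ is then a $C^{s-1}$ diffeomorphism from a neighborhood of $(0,x)\in\Rr\times\Sigma$ onto a neighborhood $U$ of $x$. Define $y_1$ to be the flow-time coordinate and set $y_{d+1}:=H$. By construction $X_H(y_1)=1$, equivalently $\{y_{d+1},y_1\}=\pm 1$. The codimension-$2$ slice $N=\{y_1=0\}\cap H^{-1}(H(x))\subset\Sigma$ then inherits a symplectic structure $\omega|_N$, because the $2$-plane $\mathrm{span}\{X_H(x),X_{y_1}(x)\}$ is a symplectic subspace and $\omega$-orthogonal to $T_xN$.

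Next I would apply the ordinary Darboux theorem on $(N,\omega|_N)$---valid at regularity $C^{s-1}$ via Moser's isotopy trick applied to the $C^{s-1}$ data---to obtain coordinates $y_2,\ldots,y_d,y_{d+2},\ldots,y_{2d}$ on a neighborhood of $x$ in $N$ with $\omega|_N=\sum_{i=2}^{d}dy_i\wedge dy_{d+i}$. Extend each such function off $N$ to $U$ by requiring it to be constant along the flows of $X_H$ and of $X_{y_1}$; this prescription is consistent because the two Hamiltonian vector fields commute, $[X_H,X_{y_1}]=-X_{\{H,y_1\}}=0$, the bracket being constant. A direct Poisson-bracket calculation, using invariance of the extended functions under both flows, verifies that the full tuple $(y_1,\ldots,y_{2d})$ satisfies the canonical Darboux relations on all of $U$; hence the map $g:=(y_1,\ldots,y_{2d})$ is a local $C^{s-1}$ symplectomorphism onto a neighborhood of $0\in\Rr^{2d}$ with $H=y_{d+1}\circ g=H_0\circ g$.

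The main technical point, and the source of the $C^{s-1}$ conclusion, is careful bookkeeping of the loss of one derivative: $X_H$ is $C^{s-1}$, its flow is $C^{s-1}$, the flow-time coordinate $y_1$ is $C^{s-1}$, and Moser's trick on $N$ yields a $C^{s-1}$ chart. A secondary care-point is checking that extending the transverse Darboux coordinates by commuting Hamiltonian flows preserves the bracket relations established on $N$; this follows from the fact that the Poisson bracket of two $X_H$- and $X_{y_1}$-invariant functions is itself invariant under both flows and therefore coincides throughout $U$ with its restriction to $N$, where it is computed to have the standard form.
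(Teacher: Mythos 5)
Your overall strategy is sound and in fact cleaner in spirit than the paper's: you construct commuting Hamiltonian flows (of $X_H$ and $X_{y_1}$) and propagate the Darboux relations from the codimension-two slice $N$ via invariance of the Poisson bracket, whereas the paper instead extends the chart along the (non-commuting, non-Hamiltonian) vector fields $X_H$ and $Y=\omega(X_H,X_G)^{-1}X_G$ and verifies $g^*\omega_0=\omega$ by a cascade of Lie derivative computations on $\Sigma_e$, then $\Sigma$, then $U$. Your Poisson-bracket bookkeeping is also correct: since $\{H,y_1\}$ is constant, $[X_H,X_{y_1}]=0$; the Hamiltonian vector fields of the extended coordinates are tangent to $N$ (because those coordinates Poisson-commute with $H$ and with $y_1$), so the ambient bracket restricted to $N$ agrees with the bracket of $(N,\omega|_N)$; and brackets of bi-invariant functions are bi-invariant by the Jacobi identity.

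The genuine gap is in exactly the bookkeeping you flag as the technical core. You correctly note that $y_1$ is only $C^{s-1}$ (it is built from the $C^{s-1}$ flow of the $C^{s-1}$ vector field $X_H$). But then $dy_1$ is $C^{s-2}$, hence $X_{y_1}$ is only $C^{s-2}$, and the flow $\varphi_{X_{y_1}}^t$ you use to extend the transverse Darboux coordinates off $N$ is only $C^{s-2}$. Consequently the chart $g=(y_1,\dots,y_{2d})$ you obtain is $C^{s-2}$, one degree worse than the $C^{s-1}$ claimed. This loss is intrinsic to your route: to make the commuting-flows argument work you need $\{H,F\}\equiv\mathrm{const}$, which forces $F$ to be a flow-time-type function of $X_H$ and hence only $C^{s-1}$. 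The paper sidesteps this by taking the ``conjugate'' direction to be $Y$ built from an arbitrary auxiliary $C^s$ function $G$ (so $Y$ and its flow stay $C^{s-1}$), at the cost of giving up commutativity and instead proving $g^*\omega_0=\omega$ directly via $\mathcal{L}_{X_H}$ and $\mathcal{L}_Y$ computations. To repair your argument you would either have to accept the weaker $C^{s-2}$ conclusion, or replace $X_{y_1}$ by such a $Y$ and abandon the commuting-flows/Poisson-invariance mechanism in favor of the paper's Lie-derivative argument.
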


\begin{proof}
Fix $e=H(x)$.
Choose any $C^s$ function $G\colon M\to\Rr$ such that $G(x)=0$ and 
\begin{equation}\label{transversality}
\omega(X_H,X_G)(x)\not=0.
\end{equation}
This defines a transversal $\Sigma$ to $X_H$ at $x$ in the following way. 
If $U\subset M$ is a small enough neighborhood of $x$ in $M$ ($U$ will always be allowed to remain as small as needed), then
$$
\Sigma=G^{-1}(0) \cap U
$$
is a $C^s$ regular connected submanifold of dimension $2d-1$.
Notice also that \eqref{transversality} holds in $U$.

Locally there is a $C^s$ regular $(2d-2)$-dimensional hypersurface of $H^{-1}(e)$ where $H$ and $G$ are both constant: $\Sigma_e=\Sigma\cap H^{-1}(e)$.
Notice that for $m\in\Sigma_e$
\begin{equation}
\begin{split}
T_m\Sigma_e
&=\{v\in T_mM\colon dH(v)(m)=dG(v)(m)=0\} \\
&=\ker(\iota_{X_H}\omega(m)) \cap \ker(\iota_{X_G}\omega(m)).
\end{split}
\end{equation}
Since $\omega(X_H,X_G)\not=0$, we have $X_G(m),X_H(m)\not\in T_m\Sigma_e$ and
$$
T_mM=T_m\Sigma_e\oplus\Rr X_H(m)\oplus\Rr X_G(m).
$$

Now, consider the closed 2-form $\omega_e=\omega|_{\Sigma_e}$ defined on $T\Sigma_e\times T\Sigma_e$.
To show that $(\Sigma_e,\omega_e)$ is a $C^s$ symplectic manifold it is enough to check that $\omega_e$ is non-degenerate.
So, suppose there is $v\in T_m\Sigma_e$ such that $\omega_e(w,v)=0$ for any $w\in T_m\Sigma_e$. 
As in addition $\omega(X_H,v)(m)=\omega(X_G,v)(m)=0$, $m\in\Sigma_e$, due to the fact that $\omega$ is non-degenerate we have to have $v=0$. Thus, $\omega_e$ is non-degenerate.
So, Darboux's theorem assures us the existence of a local diffeomorphism $h\colon\Sigma_e\to\Rr^{2d-2}$ such that 

\begin{equation}\label{Darboux}
h^*\omega_0'=\omega_e
\quad\text{where}\quad
\omega_0'=
\sum_{i=2}^{d}dy_i\land dy_{d+i}. 
\end{equation}

\begin{figure}[h]
\begin{center}
  \includegraphics[width=10cm,height=6cm]{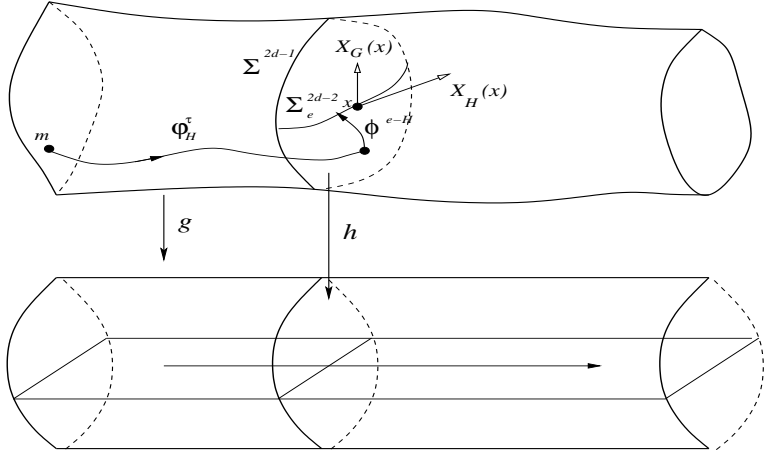}
\caption{The symplectic flowbox.}\label{f2}
\end{center}
\end{figure}

The next step is to extend the above symplectic coordinates from $\Sigma_e$ to $U$. For this purpose we use the parametrization by the flows $\varphi_H^t$ and $\phi^t$ generated by $X_H$ and $Y:=\omega(X_H,X_G)^{-1}X_G$, respectively. The time reparametrization in the definition of $Y$ is necessary to normalize the pull-back of the form as it will become clear later.

The tranversality condition \eqref{transversality} is again used in solving the equation $G\circ \varphi_H^{\tau(m)}(m)=0$, $m\in U$, with respect to a function $\tau\colon U\to\Rr$. That is, we want to find $\tau$ and $U$ such that $\varphi_H^{\tau(m)}(m)\in\Sigma$ for each $m\in U$.
By the implicit function theorem, since $G\circ \varphi_H^{0}(m)=0$ and
$$
\frac{d}{dt}G\circ\varphi_H^t(m)\vert_{t=0}=
dG(X_H)(m)=
\omega(X_G,X_H)(m)\not=0,
$$
there exists $U$ and a unique $\tau\in C^{s-1}(U,\Rr)$ as required.
Moreover, $\phi^t$ preserves the level sets of $G$ as $\LL_YG=\omega(X_G,Y)=0$, and
$$
\LL_YH=\frac{d}{dt}H\circ\phi^t(m)=
\omega(X_H,Y)\circ\phi^t(m)=1.
$$
Thus, $H\circ\phi^t(m)=H(m)+t$ and in particular $H\circ\phi^{e-H(m)}(m)=e$ meaning that $\phi^{e-H(m)}(m)\in H^{-1}(e)$ for $m\in U$.

So, we define the map $g\colon U\to\Rr^{2d}$ given by
$$
g(m)=(-\tau(m),h_1\circ\phi^{e-H(m)}\circ\varphi_H^{\tau(m)}(m),H(m),h_2\circ\phi^{e-H(m)}\circ\varphi_H^{\tau(m)}(m)),
$$
where $h=(h_1,h_2)$ as in \eqref{Darboux} and $h_i\colon\Sigma_e\to\Rr^{d-1}$.
In particular, $H_0\circ g=H$.
It remains to prove that $g$ is a $C^{s-1}$-symplectomorphism.

It follows that $g$ is $C^{s-1}$ and it has a $C^{s-1}$ inverse $g^{-1}\colon g(U)\to U$ given by
$$
g^{-1}(y)=\varphi_H^{y_1}\circ\phi^{y_{d+1}-e}\circ h^{-1}(\widehat y),
$$
where $\widehat y=(y_2,\dots,y_d,y_{d+2},\dots,y_{2d})$.
In addition, for $y\in g(U)$,
\begin{equation}
\begin{split}
g^{-1}_*\, X_{H_0}(y) &=
\dot\varphi_H^{y_1}\circ\phi^{y_{d+1}-e}\circ h^{-1}(\widehat y) \\
&=
X_H\circ\varphi_H^{y_1}\circ\phi^{y_{d+1}-e}\circ h^{-1}(\widehat y) \\
&=
X_H\circ g^{-1}(y).
\end{split}
\end{equation}
Hence, $g_*X_{H}= X_{H_0}$. Similarly, we can show that $g_*Y=\pde{}{y_{d+1}}$ when restricting to $\Sigma$.

Notice that on $g(\Sigma_e)$ we have $g^{-1}_*\pde{}{y_j} =h^{-1}_*\pde{}{y_j}$ for $j\not\in\{1,d+1\}$. Furthermore, taking in addition $k\not\in\{1,d+1\}$,
\begin{equation*}
\begin{split}
({g^{-1}}^*\omega)\left(\pde{}{y_j},\pde{}{y_k}\right) 
&=
({h^{-1}}^*\omega)\left(\pde{}{y_j},\pde{}{y_k}\right)
=\omega_0\left(\pde{}{y_j},\pde{}{y_k}\right), \\
({g^{-1}}^*\omega)\left(\pde{}{y_1},\pde{}{y_{d+1}}\right)
&=
\omega(X_H,Y)\circ g^{-1} =1.
\end{split}
\end{equation*}
Since $Dh^{-1}\pde{}{y_j}\in T\Sigma_e$, and $H$ and $G$ are constant on $\Sigma_e$,
$$
({g^{-1}}^*\omega)\left(\pde{}{y_1},\pde{}{y_j}\right)=
\omega\left(X_H,Dh^{-1}\pde{}{y_j}\right) = 
dH\left(Dh^{-1}\pde{}{y_j}\right)=0
$$
and analogously $({g^{-1}}^*\omega)\left(\pde{}{y_{d+1}},\pde{}{y_j}\right)=0$. Therefore ${g^{-1}}^*\omega$ has to be the canonical 2-form, i.e. $g^*\omega_0=\omega$ on $\Sigma_e$.

Now, we show that $g^*\omega_0=\omega$ also holds on $\Sigma$.
Using Cartan's formula for the Lie derivative $\LL_v=\iota_v d+d\iota_v$ with respect to a vector field $v$ and the identities $df^*=f^*d$ and $f^*\iota_v\omega=\iota_{f_*^{-1}v}f^*\omega$, then
$$
\LL_{Y}g^*\omega_0=g^*d\iota_{\partial/\partial y_{d+1}}\omega_0=g^*d^2(-y_1)=0.
$$
As we also have $\LL_{X_G}\omega=0$ and $\LL_{Y}\omega=0$, the forms $g^*\omega_0$ and $\omega$ are constant and coincide along the flow of $Y$ passing through $\Sigma_e$, i.e. on $\Sigma$.

In order to see that we can have $g^*\omega_0=\omega$ on all of $U$, we compute
$$
\LL_{X_H}g^*\omega_0=d \iota_{X_{H_0}}\omega_0 = d(dH_0)=0.
$$
Recall that $\LL_{X_H}\omega=0$. So, $g^*\omega_0=\omega$ along the flow of $X_H$ through $\Sigma$, thus on all $U$. This concludes the proof that $g$ is a symplectomorphism.
\end{proof}

\subsection{Hamiltonian local perturbation}

In the next lemma we introduce the main tool to perturb $2d=4$-dimensional Hamiltonians. We will then be able to perturb the transversal linear Poincar\'{e} flow in order to rotate its action by a small angle. As we shall see later, that is all we need to interchange $\mathcal{N}^{+}$ with $\mathcal{N}^{-}$ using the lack of dominance.

For functions on $\Rr^4$ consider the $C^k$-norm, with $k\geq0$ integer,
$$
\|f\|_{C^k}= \sup_{y} \max_{0\leq|\sigma|\leq k}
\left |\frac{\partial^{|\sigma|} f(y)}{\partial^{\sigma_1}y_1\dots\partial^{\sigma_4}y_4}\right|,
$$
where $\sigma=(\sigma_1,\dots,\sigma_4)\in \Nn_0^4$ with $|\sigma|=\sum_i \sigma_i$.
Define the ``tube''
$$
V_{a,b,c}=\{(y_1,y_2,y_3,y_4)\in\Rr^4\colon a<y_1<b, \sqrt{y_2^2+y_4^2} < c, |y_3|<c \}.
$$
Moreover, take the $2$-dim plane
$
\Sigma_{0}=\{(0,y_2,0,y_4)\in\Rr^4\}
$
and the orthogonal projection $\pi_{0}\colon\Rr^4\to\Sigma_{0}$.
Notice that the transversal linear Poincar\'e flow of $H_0(y)=y_3$ on $\Sigma_0$ is given by $\Phi_{H_0}^t(y_2,y_4)=\pi_0$.

In the following we fix a universal $0<\varrho<1$.

\begin{lemma}\label{basic}
Given $0<\nu<1$ and $\epsilon>0$, there exists $\alpha_0>0$ such that, for every $0<r<1$ and $0<\alpha\leq\alpha_0$, we can find $H\in C^\infty(\Rr^4,\Rr)$ satisfying
\begin{itemize}
\item
$H=H_0$ outside $V_{0,\varrho,r}$,
\item
$\|H-H_0\|_{C^2}<\epsilon$,
\item
$DX_H(y)=0$ for $y\in\{0,\varrho\}\times\Rr^3$ and
\item
$\Phi_H^1(0,y_2,0,y_4):=(\pi_0 D\varphi_H^1)(0,y_2,0,y_4)=R_{\alpha}$ on $\Sigma_0$ with $\sqrt{y_2^2+y_4^2}<r\nu$, where 
$$
R_{\alpha}=
\begin{bmatrix}
0&0&0&0\\
0&\cos\alpha &0& -\sin\alpha \\
0&0&0&0\\
0&\sin\alpha &0& \cos\alpha
\end{bmatrix}.
$$
\end{itemize}
\end{lemma}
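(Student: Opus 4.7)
The plan is to take $H=H_0+h$ with
$$
h(y)=-\tfrac{\alpha}{2}\,\rho(y_1)\,\phi(y_2,y_3,y_4)\,(y_2^2+y_4^2),
$$
where $\rho\colon\Rr\to[0,\infty)$ is a fixed $C^\infty$ bump supported in $[0,\varrho]$, vanishing to order $\geq 2$ at both endpoints, and normalized by $\int_0^\varrho\rho=1$; and $\phi$ is a $C^\infty$ cut-off equal to $1$ on $\{\sqrt{y_2^2+y_4^2}\leq r\nu,\ |y_3|\leq r\nu\}$ and supported in $\{\sqrt{y_2^2+y_4^2}<r,\ |y_3|<r\}$. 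The first three bullets of the lemma are then routine: the support of $h$ sits in $V_{0,\varrho,r}$ so $H=H_0$ outside; every entry of $DX_h$ is a linear combination of $\rho,\rho',\rho''$ evaluated at $y_1$ times bounded polynomial-times-cut-off factors, so $DX_H\equiv 0$ on $\{y_1=0\}\cup\{y_1=\varrho\}$; and each derivative of order $\leq 2$ of $h$ carries one factor of $\alpha$ together with a matched pair of $\phi$-derivatives of size $O(r^{-k})$ and polynomials in $(y_2,y_3,y_4)$ of size $O(r^k)$, giving $\|h\|_{C^2}\leq C(\nu,\varrho)\,\alpha$ uniformly in $r\in(0,1)$, which is below $\epsilon$ for $\alpha\leq\alpha_0$ sufficiently small.

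The core of the proof is the computation of $\Phi_H^1$ at a point $p=(0,y_2^0,0,y_4^0)$ with $r_0:=\sqrt{(y_2^0)^2+(y_4^0)^2}<r\nu$. On any open set where $\phi\equiv 1$ the Hamiltonian vector field reduces to
$$
X_H=\partial_{y_1}-\alpha\rho(y_1)y_4\,\partial_{y_2}+\tfrac{\alpha}{2}\rho'(y_1)(y_2^2+y_4^2)\,\partial_{y_3}+\alpha\rho(y_1)y_2\,\partial_{y_4},
$$
and elementary integration from $p$ gives $y_1(t)=t$, $(y_2(t),y_4(t))=R_{\theta(t)}(y_2^0,y_4^0)$ with $\theta(t)=\int_0^t\alpha\rho(s)\,ds$, and $y_3(t)=\tfrac12\alpha r_0^2\rho(t)$. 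The choice of $\alpha_0$ guarantees $|y_3(t)|<r\nu$ for all $t\in[0,1]$, so the orbit stays strictly inside the region where $\phi\equiv 1$; since this holds in a whole neighborhood of the orbit, the cut-off plays no role in the linearization either.

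Next I would analyze the variational equation $\dot A(t)=B(t)A(t)$, $A(0)=I$, with $B(t)=DX_H(\varphi_H^t(p))$. Along the orbit the first row of $B$ vanishes, $B_{22}=B_{33}=B_{44}=B_{23}=B_{43}=0$, and the $(2,4)\times(2,4)$ block equals $\alpha\rho(t)\left(\begin{smallmatrix}0&-1\\1&0\end{smallmatrix}\right)$. Consequently, for each column $j\in\{2,3,4\}$ the pair $(A_{2j},A_{4j})$ decouples and rotates by the total angle $\theta(1)=\alpha$, producing exactly the $(2,4)$-rows of $R_\alpha$ prescribed in the statement. For the column $j=1$, passing to the complex variable $w=A_{21}+iA_{41}$ and $z(t)=y_2(t)+iy_4(t)=e^{i\theta(t)}z_0$ reduces the system to
$$
\dot w=i\alpha\rho(t)w+i\alpha\rho'(t)e^{i\theta(t)}z_0,\qquad w(0)=0,
$$
and variation of parameters (absorb the homogeneous rotation via $\tilde w=e^{-i\theta}w$) integrates this to $w(t)=i\alpha\rho(t)e^{i\theta(t)}z_0$, which vanishes at $t=1$ because $\rho(1)=0$. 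Hence $A_{21}(1)=A_{41}(1)=0$, and the orthogonal projection $\pi_0$, zeroing the first and third rows of $A(1)$, turns it into precisely the matrix $R_\alpha$.

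The main obstacle is this coupling column $j=1$: absent a cancellation, the inhomogeneous term $i\alpha\rho'(t)z(t)$ in the variational equation would leak a contribution of the reference orbit into the projected time-1 linearization and destroy the clean rotation. What saves the construction is that, after absorbing the homogeneous rotation, the inhomogeneity integrates to a pure ``boundary'' term $\rho(1)-\rho(0)=0$. Thus the second-order vanishing of $\rho$ at the endpoints of its support simultaneously secures $DX_H=0$ on the boundary planes of the tube and forces the coupling column to disappear at $t=1$, yielding the desired pure rotation $R_\alpha$.
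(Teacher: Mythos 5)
Your proof is correct and follows the paper's own construction almost verbatim: a perturbation of the form $H = H_0 - \tfrac{\alpha}{2}\,\ell(y_1)\,(\text{cut-off})\,(y_2^2+y_4^2)$, with a normalized temporal bump $\ell$ supported in $(0,\varrho)$, producing a rotation by $\alpha$ in the transverse $(y_2,y_4)$-plane on the region where the cut-off is identically $1$, while the cut-off derivatives trade off against the matching powers of $y_2,y_3,y_4$ to give $\|H-H_0\|_{C^2}=O(\alpha)$ uniformly in $r\in(0,1)$. Your extra variational-equation analysis of the coupling column $j=1$ is correct but more than the lemma demands — the phrase ``on $\Sigma_0$'' asks only for $\pi_0 D\varphi_H^1$ restricted to $\Sigma_0$, which is the $(y_2,y_4)$-block the paper reads off directly by differentiating the explicit time-$1$ map; in both routes the key vanishing comes from the same boundary cancellation $\ell(1)=\ell(0)=0$.
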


\begin{proof}
Consider the Hamiltonian flow $\varphi_{H_0}^t(y)=(y_1+t,y_2,y_3,y_4)$.
We want to $\epsilon$-$C^2$-perturb $H_0$ to get a Hamiltonian flow that rotates on the $(y_2,y_4)$-plane while the orbit is inside $V_{\xi,\xi',r\nu}$ for some fixed universal constants $0<\xi<\xi'<\varrho<1$. Outside the slightly larger tube $V_{0,\varrho,r}$ we impose no perturbation. 

In order to construct a $C^\infty$ perturbation on those terms, we need to consider three bump functions. 
It is possible to find $C^\infty$ maps $\ell\colon\Rr\to\Rr$ along the time direction and $\widetilde\ell\colon\Rr\to\Rr$ on the $y_3$-direction satisfying 
$$
\ell(y_1)=
\begin{cases}
\ell_0, & y_1 \in [\xi,\xi'] \\
0, & y_1 \not\in (0,\varrho),
\end{cases}
\quad\quad
\widetilde\ell(y_3)=
\begin{cases}
1, & |y_3|\leq r\nu \\
0, & |y_3|\geq r,
\end{cases}
$$
$\ell_0>0$, $\int_0^1\ell=1$, 
the norms $\|\ell\|_{C^0},\|\ell'\|_{C^0},\|\ell''\|_{C^0}$, $\|\widetilde\ell\|_{C^0}$ all bounded from above by a constant (recall that $\xi$, $\xi'$ and $\varrho$ are seen as universal),
$\|\widetilde\ell'\|_{C^0}\leq \frac2{(1-\nu) r}$ and
$\|\widetilde\ell''\|_{C^0}\leq \frac {4}{[(1-\nu) r]^2}$.
Similarly, get a $C^\infty$ map $\phi\colon\Rr^+_0\to\Rr$ for the plane $(y_2,y_4)$ such that
$$
\phi(\rho)=
\begin{cases}
\frac{\rho^2}2, & \rho \leq r\nu  \\
0, & \rho \geq r,
\end{cases}
$$
$\|\phi\|_{C^0}\leq (r\nu)^2$, $\|\phi'\|_{C^0}\leq \frac{2r\nu^2}{1-\nu}$ and $\|\phi''\|_{C^0}\leq \left(\frac{2\nu}{1-\nu}\right)^2$.

Now, we construct the perturbed Hamiltonian
\begin{equation}\label{pert H}
H(y)=H_0(y) - \alpha \ell(y_1)\, \widetilde\ell(y_3)\, \phi(\rho),
\end{equation}
where $\rho=\sqrt{y_2^2+y_4^2}$. Clearly, it is equal to $H_0$ outside $V_{0,\varrho,r}$.
Hence, for $y\in V_{0,1,r\nu}$,
\begin{equation}\label{nabla H}
\nabla H(y)=\left(
-\alpha\ell'(y_1)\phi(\rho),
-\alpha\, y_2\ell'(y_1),
1,
-\alpha\, y_4\ell'(y_1)
\right).
\end{equation}
So, on this domain, $X_H$ generates the flow
\begin{equation}
\begin{split}
\varphi_H^t(y)= & \left(
y_1+t, 
\rho \cos\left(\theta+\alpha\,\int_0^t\ell(y_1+s)ds \right), \right.\\
& 
y_3+\alpha\,\phi(\rho)\,[\ell(y_1+t)-\ell(y_1)], \\
&\left.
\rho \sin\left(\theta+\alpha\,\int_0^t\ell(y_1+s)ds\right)
\right),
\end{split}
\end{equation}
where $\theta=\arctan(y_4/y_2)$.
Notice that $\frac d{dt}\rho^2=0$ so that $\rho$ is $\varphi_H^t$-invariant. That is, on the $(y_2,y_4)$-plane the motion consists of a rotation. 
Furthermore, by fixing $y_3=0$,
$|y_3(t)|\leq \alpha \,|\frac{\rho^2}2|\,|\ell(t)| \leq r\nu$
if $\alpha\leq 2(\|\ell\|_{C^0}r\nu)^{-1}$.
Now, if $\rho<r\nu$,
$$
\varphi_H^1(0,y_2,0,y_4)=(1,\rho\cos(\theta+\alpha),0,\rho\sin(\theta+\alpha))
$$
and $(\pi_0D\varphi_H^1)(0,y_2,0,y_4)\,v=R_\alpha \, v$, $v\in \Sigma_0$.

Finally, we need to estimate the $C^2$-norm of the perturbation.
 From \eqref{pert H} and \eqref{nabla H} we get
\begin{equation}
\|H-H_0\|_{C^1}  \ll \alpha r \nu (1-\nu)^{-1},
\end{equation}
where we are using the notation $A\ll B$ to mean that there is a constant $C>0$ such that $A\leq CB$.
The second order derivatives are
\begin{equation}
\begin{split}
\frac{\partial^2H}{\partial y_1^2} &=-\alpha \ell''(y_1)\widetilde\ell(y_3)\phi(\rho) \\
\frac{\partial^2H}{\partial y_1\partial y_3} &=-\alpha \ell'(y_1)\widetilde\ell'(y_3)\phi(\rho) \\
\frac{\partial^2H}{\partial y_2\partial y_4}& =-\alpha \frac{y_2y_4}{\rho^2} \ell(y_1) \widetilde\ell(y_3) \left(\phi''(\rho)-\phi'(\rho)\rho^{-1}\right) \\
\frac{\partial^2H}{\partial y_1\partial y_j} &=-\alpha y_j \rho^{-1}\ell'(y_1)\widetilde\ell(y_3)\phi'(\rho) \\
\frac{\partial^2H}{\partial y_j^2} &=-\alpha \ell(y_1)\widetilde\ell(y_3)\left[\phi''(\rho)y_j^2\rho^{-2}+\phi'(\rho)\rho^{-1}-\phi'(\rho)y_j^2\rho^{-3}\right]\\
\frac{\partial^2H}{\partial y_3\partial y_j} &=-\alpha \ell(y_1)\widetilde\ell'(y_3)\phi'(\rho)y_j\rho^{-1} \\
\frac{\partial^2H}{\partial y_3^2} &=-\alpha \ell(y_1)\widetilde\ell''(y_3)\phi(\rho)
\end{split}
\end{equation}
where $j=2,4$. So, $DX_H=0$ if $y_1\leq 0$ or $y_1\geq\varrho$, and
\begin{equation}
\begin{split}
\|D^2(H-H_0)\|_{C^0} & \ll  \alpha (1-\nu)^{-2}.
\end{split}
\end{equation}
Hence, there is $\alpha_0\ll \epsilon (1-\nu)^ 2$ such that $\|H-H_0\|_{C^2}<\epsilon$ for all $0<\alpha\leq\alpha_0$.
\end{proof}

\begin{remark}
It is not possible to find $\alpha$ as above if we require $C^3$-closeness. This can easily be seen in the proof by computing the third order derivatives. E.g. $\frac{\partial^3}{\partial y_2^3}H$ contains the term $\alpha \ell(y_1)\widetilde\ell(y_3)y_2^3 \rho^{-3} \phi'''(\rho)$ that can not be controlled by a bound of smaller order than $\alpha r^{-1}$.
\end{remark}

\end{subsection}


\begin{subsection}{Realizing Hamiltonian systems}\label{realizable}

In this section we define the central objects for the proof of Proposition~\ref{main}, the achievable or \emph{realizable} linear flows. These will be constructed by perturbations of $\Phi_H^t$.
We start with a point $x\in\mathcal{O}(H)$ with lack of hyperbolic behavior and mix the directions $\mathcal{N}_{x}^{+}$ and $\mathcal{N}_{x}^{-}$ to cause the decay of the upper Lyapunov exponent. 
In fact we are interested in ``a lot'' of points (related to the Lebesgue measure on transversal sections). Therefore, we perturb the Hamiltonian to make sure that ``many'' points $y$ near $x$ have $\Phi_H^t(y)$ close to $\Phi_H^t(x)$. For this reason we must be very careful in our procedure. 

Consider a Darboux atlas $\{h_{j}\colon U_{j}\to \mathbb{R}^{4}\}_{j\in\{1,...,\ell\}}$.
For each $x\in\mathcal{R}(H)$ choose $j$ such that $x\in U_{j}$,
and take the $3$-dimensional normal section $\mathfrak{N}_{x}$ to the flow. 
In the sequel we abuse notation to write $\mathfrak{N}_{x}$ for $h_j(\mathfrak{N}_{x}\cap U_j)$, so that we work in $\Rr^{4}$ instead of $M$. 
Furthermore, denote by $B(x,r)=\{(u,v,w)\in \Rr^3\colon\sqrt{u^2+v^2}<r,|w|<r\}$ the open ball in $\mathfrak{N}_{x}$ about $x$ with small enough radius $r$. 
We estimate the distance between linear maps on tangent fibers at different base points by using the atlas and translating the objects to the origin in $\Rr^4$.
That is, $\|A_1^t-A_2^t\|$ for linear flows $A_i^t\colon T_{x_i}M\to T_{\varphi_H^t(x_i)}M$, is given by 
$$
\|Dh_{j_{1,t}}(\varphi_H^t(x_1))A_1^t(Dh_{j_{1,0}}(x_1))^{-1}-Dh_{j_{2,t}}(\varphi_H^t(x_2))A_2^t(Dh_{j_{2,0}}(x_2))^{-1}\|,
$$
where $j_{i,t}$ is the indice of the chart corresponding to $\varphi_H^t(x_i)$.

Consider the standard Poincar\'{e} map 
$$
\mathcal{P}_{H}^{t}(x)\colon U \to {\mathfrak{N}_{\varphi_{H}^{t}(x)}},
$$
where $U\subset \mathfrak{N}_{x}$ is chosen sufficiently small. Given $T>0$, the self-disjoint set
$$
\mathcal{F}_{H}^{T}(x,U)=\left\{\mathcal{P}_{H}^{t}(x)\,y\in M\colon y\in{U},t\in[0,T]\right\},
$$
is called a \emph{$T$-length flowbox} at $x$ associated to the Hamiltonian $H$.

There is a natural way to define a measure $\overline{\mu}$ in the transversal sections by considering the invariant volume form $\iota_{X_H}\omega^d$.
We easily obtain an estimate on the time evolution of the measure of transversal sets:
for $\nu,t>0$ there is $r>0$ such that for any measurable $A\subset B(x,r)$ we have
\begin{equation}\label{time ev trans measure}
\left|\muo(A) - \alpha(t)\, \muo(\mathcal P_H^t(x)\,A) 
\right| <\nu,
\end{equation}
where 
$$
\alpha(t)=\frac{\|X_H(\varphi_H^t(x))\|}{\|X_H(x)\|}.
$$

\begin{definition}\label{rlf}
Fix a Hamiltonian $H\in C^{s+1}(M,\Rr)$, $s\geq2$ or $s=\infty$, $T,\epsilon>0$, $0<\kappa<1$ and a non-periodic point $x\in M$ (or with period larger than $T$). The flow $L$ of symplectic linear maps:
$$
L^t(x)\colon \NN_x\to\NN_{\varphi_H^t(x)},
\quad
0\leq t\leq T, 
$$
is {\em $(\epsilon,\kappa)$-realizable of length $T$ at $x$} if the following holds:

For $\gamma>0$ there is  ${r}>0$ such that for any open set $U\subset{B(x,r)}\subset \mathfrak{N}_{x}$ we can find
\begin{enumerate}
\item \label{rlf 0}
$K\subset{U}$ with $\muo(U\setminus K)\leq \kappa\, \muo(U)$, and 
\item \label{rlf 0a}
$\widetilde H\in C^s(M,\Rr)$ $\epsilon$-$C^{2}$-close to $H$, verifying
\begin{enumerate}
\item \label{rlf 1}
$H=\widetilde H$ outside $\mathcal{F}_{H}^{T}(x,U)$,
\item \label{rlf 2}
$DX_{H}(y)=DX_{\widetilde{H}}(y)$ for $y\in U\cup\mathcal{P}_{H}^{T}(x)\,U$, and
\item \label{rlf 3}
$\|\Phi^{T}_{\widetilde{H}}(y)-L^T(x)\|<\gamma$ for all $y\in{K}$.
\end{enumerate}
\end{enumerate}
\end{definition}

Let us add a few words about this definition: \eqref{rlf 1} and \eqref{rlf 2} guarantee that the support of the perturbation is restricted to the flowbox and it $C^1$ ``glues'' to its complement; \eqref{rlf 3} says that a large percentage of points (given numerically by \eqref{rlf 0}) have the transversal linear Poincar\'{e} flow of $\widetilde H$ (as in \eqref{rlf 0a}) very close to the abstract linear action of the central point $x$ along the orbit.
Notice that the realizability is with respect to the $C^{2}$ topology.

\begin{remark}\label{vitali}
Using Vitali covering arguments we may replace any open set $U$ of Definition {\rm \ref{rlf}} by open balls. That turns out to be very useful because the basic perturbation Lemma {\rm\ref{basic}} works for balls.
\end{remark}

It is an immediate consequence of the definition that the transversal linear Poincar\'{e} flow of $H$ is itself a realizable linear flow.
In addition, the concatenation of two realizable linear flows is still a realizable linear flow as it is shown in the following lemma.

\begin{lemma}\label{trivial}
Let $H\in C^{s+1}(M,\Rr)$, $s\geq2$ or $s=\infty$, and $x\in{M}$ non-periodic.
If $L_1$ is $(\epsilon,\kappa_1)$-realizable of length $T_1$ at $x$ and
$L_2$ is $(\epsilon,\kappa_2)$-realizable of length $T_2$ at $\varphi_{H}^{T_1}(x)$ so that $\kappa=\kappa_1+\kappa_2<1$, then the concatenated linear flow 
$$
L^t(x)=
\begin{cases}
L_1^{t}(x), & 0\leq t\leq T_1 \\
L_2^{t-T_1}(\varphi_H^{T_1}(x))\, L_1^{T_1}(x), & T_1<t\leq T_1+T_2 
\end{cases}
$$
is $(\epsilon,\kappa)$-realizable of length $T_1+T_2$ at $x$.
\end{lemma}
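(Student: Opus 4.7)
The plan is to apply the two realizabilities consecutively, one in each of the disjoint flowboxes $\mathcal{F}_H^{T_1}(x,U)$ and $\mathcal{F}_H^{T_2}(\varphi_H^{T_1}(x),U_2)$ (disjoint because $x$ has period $>T_1+T_2$ or is non-periodic), and then patch the two resulting Hamiltonians together with $H$ outside. Given $\gamma>0$, I first pick $\gamma_1,\gamma_2>0$ small enough so that the product estimate
\[
\|A_2A_1-B_2B_1\|\leq \|A_2\|\,\|A_1-B_1\|+\|B_1\|\,\|A_2-B_2\|
\]
applied to the two transversal linear Poincar\'e flows and their targets $L_i^{T_i}$ (whose operator norms are fixed and finite) produces a combined error less than $\gamma$. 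Feeding $\gamma_i$ into the realizability of $L_i$ yields radii $r_i>0$ at the corresponding base points $x$ and $\varphi_H^{T_1}(x)$.

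For any open $U\subset B(x,r)$, where $r\leq r_1$ is chosen so that, by continuity of $\mathcal{P}_H^{T_1}(x)$, the set $U_2:=\mathcal{P}_H^{T_1}(x)U$ lies in $B(\varphi_H^{T_1}(x),r_2)$, the realizability of $L_1$ produces $K_1\subset U$ with $\muo(U\setminus K_1)\leq\kappa_1\muo(U)$ and a perturbation $\widetilde H_1$; the realizability of $L_2$ applied to $U_2$ produces $K_2\subset U_2$ with $\muo(U_2\setminus K_2)\leq\kappa_2\muo(U_2)$ and $\widetilde H_2$. I would set
\[
\widetilde H=\widetilde H_1\ \text{on}\ \mathcal{F}_H^{T_1}(x,U),\quad \widetilde H=\widetilde H_2\ \text{on}\ \mathcal{F}_H^{T_2}(\varphi_H^{T_1}(x),U_2),\quad \widetilde H=H\ \text{otherwise.}
\]
The Hessian-matching condition (\ref{rlf 2}) at the four transversal slices bounding these flowboxes ensures $\widetilde H$ is $C^2$, and the disjoint supports keep the $C^2$-distance at most $\epsilon$. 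With $K=K_1\cap(\mathcal{P}_H^{T_1}(x))^{-1}K_2$, the chain rule $\Phi_{\widetilde H}^{T_1+T_2}(y)=\Phi_{\widetilde H_2}^{T_2}(\varphi_H^{T_1}(y))\,\Phi_{\widetilde H_1}^{T_1}(y)$ together with the product estimate yields $\|\Phi_{\widetilde H}^{T_1+T_2}(y)-L^{T_1+T_2}(x)\|<\gamma$ for $y\in K$, verifying condition (\ref{rlf 3}).

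The main technical point is the measure bound $\muo(U\setminus K)\leq\kappa\muo(U)$. Writing $U\setminus K\subset(U\setminus K_1)\cup(\mathcal{P}_H^{T_1}(x))^{-1}(U_2\setminus K_2)$, the first piece is directly bounded by $\kappa_1\muo(U)$; for the second I would invoke \eqref{time ev trans measure} to transport the bound $\muo(U_2\setminus K_2)\leq\kappa_2\muo(U_2)$ across the Poincar\'e map, absorbing the multiplicative factor $\alpha(T_1)$ by shrinking $r$ and the tolerance $\nu$ in \eqref{time ev trans measure} enough that source and target measures agree up to an arbitrarily small error. A delicate bookkeeping step is that I tacitly used $\varphi_{\widetilde H_1}^{T_1}(x)=\varphi_H^{T_1}(x)$ so that the second base point is unchanged by the first perturbation; this is valid because the central orbit is preserved by the construction in Lemma~\ref{basic}, but in full generality one should replace $\mathcal{P}_H^{T_1}$ by $\mathcal{P}_{\widetilde H_1}^{T_1}$ and reduce $r$ further to keep $U_2\subset B(\varphi_H^{T_1}(x),r_2)$.
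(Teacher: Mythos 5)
Your argument mirrors the paper's almost line for line: you paste $\widetilde H_1$, $\widetilde H_2$ and $H$ over the disjoint flowboxes, take $K=K_1\cap(\mathcal P_H^{T_1}(x))^{-1}K_2$, invoke \eqref{time ev trans measure} for the measure bound, and use the operator-norm splitting $\|A_2A_1-B_2B_1\|\leq\|A_2\|\,\|A_1-B_1\|+\|B_1\|\,\|A_2-B_2\|$ for the norm-closeness condition. Your final remark about $\mathcal P_H^{T_1}$ versus $\mathcal P_{\widetilde H_1}^{T_1}$ is a genuine subtlety that the paper passes over silently, and you are right that it is harmless for the reason you give.

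The one place where I would correct the wording is the measure estimate. You say the multiplicative factor $\alpha(T_1)$ can be ``absorbed by shrinking $r$ \dots so that source and target measures agree up to an arbitrarily small error.'' But $\alpha(T_1)=\|X_H(\varphi_H^{T_1}(x))\|/\|X_H(x)\|$ is a fixed positive constant depending only on $H$, $x$ and $T_1$; it does not tend to $1$ as $r\to 0$, and the Poincar\'e map is not approximately measure-preserving on the transversals. What makes the estimate close is an exact cancellation: applying \eqref{time ev trans measure} once with $A=(\mathcal P_H^{T_1}(x))^{-1}(U_2\setminus K_2)$ gives $\muo(A)\lesssim\alpha(T_1)\,\muo(U_2\setminus K_2)\leq\alpha(T_1)\kappa_2\,\muo(U_2)$, and applying it again with $A=U$ gives $\alpha(T_1)\,\muo(U_2)\lesssim\muo(U)$, so $\alpha(T_1)$ drops out and the second piece contributes $\kappa_2\,\muo(U)$, just as in the paper. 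Shrinking $r$ only controls the additive $\nu$-errors in \eqref{time ev trans measure}, not the distortion $\alpha(T_1)$ itself.
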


\begin{remark}\label{remark concat}
Notice that concatenation of realizable flows worsens $\kappa$.
\end{remark}

\begin{proof}
For $\gamma>0$, take $r_1, r_2,K_1,K_2,\widetilde H_1,\widetilde H_2$ the obvious variables in the definition for $L_1$ and $L_2$.
We want to find the corresponding ones $r,K,\widetilde H$ for $L$ satisfying the properties of realizable flows. Let $x_2=\varphi_H^{T_1}(x)$.

\begin{itemize}
\item
First, choose $r\leq r_1$ such that 
$$
U_2:=\mathcal P_H^{T_1}(x)\, U\subset B(x_2,r_2)
$$
with $U=B(x,r)$.
\item
Now, we construct $\widetilde H$ as
$$
\widetilde H =
\begin{cases}
\widetilde H_1 &\text{on } \mathcal F_H^{T_1}(x,U) \\
\widetilde H_2 &\text{on } \mathcal F_H^{T_2}(x_2,U_2) \\
H & \text{otherwise.}
\end{cases}
$$
Notice that $\mathcal F_H^{T_1+T_2}(x,U) = \mathcal F_H^{T_1}(x,U) \cup \mathcal F_H^{T_2}(x_2,U_2)$.
\item
Consider $K=K_1\cap \mathcal P_H^{-T_1}(x)\,(K_2 \cap U_2)$.
Hence,
\begin{equation*}
\begin{split}
\muo(U\setminus K) & \leq 
\muo(U\setminus K_1) +\muo(U\setminus \mathcal P_H^{-T_1}(x)\,(K_2 \cap U_2)) \\
&\leq 
(\kappa_1+1)\,\muo(U) -\muo(\mathcal P_H^{-T_1}(x)\,(K_2 \cap U_2)).
\end{split}
\end{equation*}
Now, by \eqref{time ev trans measure} applied to $A=\mathcal P_H^{-T_1}(x)\,(K_2 \cap U_2)$ we know that
\begin{equation*}
\begin{split}
\muo(\mathcal P_H^{-T_1}(x)\,(K_2 \cap U_2)) & \geq 
\alpha(T_1)\,\muo(K_2\cap U_2) \\
& =
\alpha(T_1)\,[\muo(U_2)-\muo(U_2\setminus K_2)] \\
& \geq
\alpha(T_1)(1-\kappa_2)\,\muo(U_2).
\end{split}
\end{equation*}
On the other hand, using \eqref{time ev trans measure} for $A=U$, $\muo(U_2)\geq \alpha(T_1)^{-1}\muo(U)$.
Combining all the above estimates we get
$$
\muo(U\setminus K) \leq (\kappa_1+\kappa_2)\,\muo(U).
$$
\item
The choice of $\widetilde H$ yields that $DX_H=DX_{\widetilde H}$ on $U$ because that is true for $\widetilde H_1$. The same on $\mathcal P_H^{T_1+T_2}(x)\,U$ related to $\widetilde H_2$.
\item
In order to check that $\widetilde H$ is $C^s$ it is enough to look at $U_2$. That follows from the same reason as the previous item.
\item
Finally, there is $C>0$ verifying for $y\in K$ and writing $y_2=\mathcal P_H^{T_1}(x)\,y$,
\begin{equation*}
\begin{split}
\|\Phi_{\widetilde H}^{T_1+T_2}(y)-L^{T_1+T_2}(x)\|
\leq &
\|\Phi_{\widetilde H}^{T_2}(y_2)[\Phi_{\widetilde H}^{T_1}(y)-L^{T_1}(x)]\| \\
&
+\|[\Phi_{\widetilde H}^{T_2}(y_2)- L^{T_2}(x_2)]L^{T_1}(x)\| \\
< &
C\gamma.
\end{split}
\end{equation*}
\end{itemize}
\end{proof}

The next lemma is the basic mechanism to perform perturbations in time length $1$, for which we use Lemma~\ref{basic} to realize the map $\Phi_{H}^{t}(x)\circ R_{\alpha}$ (the rotation $R_\alpha$ is defined in a canonical basis of $\mathcal{N}_x$ by the matrix given in Lemma~\ref{basic}).
We will not be needing more than lenght $1$ realizable flows, since we can concatenate them (keeping in mind Remark \ref{remark concat}). Each lenght $1$ piece contributes to rotations by the same angle $\alpha$, independently of $x$, as shown below.

\begin{lemma}\label{rlf1}
Let $H\in C^{s+1}(M,\Rr)$, $s\geq2$ or $s=\infty$, $\epsilon>0$ and $0<\kappa<1$.
Then, there exists $\alpha_0=\alpha_0(H,\epsilon,\kappa)>0$ such that for any non-periodic point $x\in{M}$ (or with period larger than $1$) and $0<\alpha\leq\alpha_0$, the linear flow $\Phi_{H}^{t}(x)\circ R_{\alpha}\colon \NN_x\to\NN_{\varphi_H^t(x)}$ is $(\epsilon,\kappa)$-realizable of length $1$ at $x$.
\end{lemma}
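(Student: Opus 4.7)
The plan is to pull the problem back via Theorem~\ref{robinson} to the standard model on $\Rr^4$, apply Lemma~\ref{basic} to insert a rotation by angle $\alpha$ into the transversal linear Poincar\'e flow, and then push the perturbed Hamiltonian back to $M$, verifying the realizability conditions of Definition~\ref{rlf} one by one. Since $x$ has minimal period larger than $1$, the arc $\{\varphi_H^t(x)\colon 0\leq t\leq 1\}$ is an embedded compact curve; Theorem~\ref{robinson}, applied at points of this arc and glued along the flow if a single chart does not cover it, yields a tubular neighborhood $W$ of the arc and a $C^s$-symplectomorphism $g\colon(W,\omega)\to(V,\omega_0)\subset\Rr^4$ with $H=H_0\circ g$ for $H_0(y)=y_3$. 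In these coordinates the Hamiltonian flow is the translation $(y_1,y_2,y_3,y_4)\mapsto(y_1+t,y_2,y_3,y_4)$, the normal section $\mathfrak N_x$ corresponds to $\{y_1=0\}$, and $\mathcal P_H^1(x)\,\mathfrak N_x$ to $\{y_1=1\}$. Because we may work inside the fixed finite Darboux atlas used in section~\ref{realizable}, the $C^2$-norms of $g$ and $g^{-1}$ admit a uniform bound $C=C(H)$ independent of $x$.

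Next fix $\nu\in(0,1)$ with $\nu^3\geq 1-\kappa$ and apply Lemma~\ref{basic} with $\epsilon/C$ in place of $\epsilon$ to obtain $\alpha_0=\alpha_0(H,\epsilon,\kappa)>0$. For any $0<\alpha\leq\alpha_0$ the lemma provides a $C^\infty$ Hamiltonian $\hat H$ on $\Rr^4$ coinciding with $H_0$ outside the tube $V_{0,\varrho,r}\subset\{0<y_1<\varrho<1\}$, with $\|\hat H-H_0\|_{C^2}<\epsilon/C$, with $DX_{\hat H}=0$ on $\{y_1\in\{0,\varrho\}\}$, and whose projected time-$1$ map equals $R_\alpha$ on the slab $\{\sqrt{y_2^2+y_4^2}<r\nu\}$ of $\{y_1=0\}$. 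Define $\widetilde H=\hat H\circ g$ on $W$ and $\widetilde H=H$ elsewhere. Since $\hat H$ agrees with $H_0$ together with all first and second derivatives outside $V_{0,\varrho,r}$, the glueing is $C^s$ and the change-of-variables estimate yields $\|\widetilde H-H\|_{C^2}<\epsilon$.

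To verify Definition~\ref{rlf}, fix $\gamma>0$ and shrink $r>0$ so that Lemma~\ref{basic} applies on $B(x,r)$ and the coordinate distortion leaves all estimates within $\gamma$; by Remark~\ref{vitali} we may take $U=B(x,r)$. Items (a) and (b) are immediate from the support of the perturbation and from $\ell=\ell'=\ell''=0$ at $y_1=0$ and $y_1\geq\varrho$, noting that $y_1=1>\varrho$ already lies in the region where $\hat H\equiv H_0$, so $DX_{\widetilde H}=DX_H$ on both end sections. Let $K\subset U$ correspond in coordinates to $\{\sqrt{y_2^2+y_4^2}<r\nu,\,|y_3|<r\nu\}$; the choice $\nu^3\geq 1-\kappa$ combined with \eqref{time ev trans measure} (after possibly further shrinking $r$) gives $\muo(U\setminus K)\leq\kappa\,\muo(U)$. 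A direct Jacobian calculation from the flow formula in the proof of Lemma~\ref{basic} shows that $\Phi_{\hat H}^1(y)=R_\alpha$ holds on all of $K$ (not merely at $y_3=0$), since $\widetilde\ell\equiv 1$ on $|y_3|<r\nu$ and the rotation angle does not depend on $y_3$; continuity of $Dg$ at $x$ then yields $\|\Phi_{\widetilde H}^1(y)-\Phi_H^1(x)\circ R_\alpha\|<\gamma$ for $y\in K$ on the manifold.

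The main obstacle I anticipate is keeping $\alpha_0$ independent of $x$: both $g$ and its $C^2$-norms depend on $x$, and $R_\alpha$ is naturally expressed in the $g$-induced basis of $\mathcal N_x$ rather than a canonical one. This is resolved by appealing to the fixed finite Darboux atlas introduced in section~\ref{realizable}, which gives uniform bounds $C=C(H)$ on all derivatives of $g$ and $g^{-1}$, together with the observation that the constants in Lemma~\ref{basic} are universal once $\nu$ and $\epsilon$ are chosen; the extra factor $C$ is absorbed by replacing $\epsilon$ by $\epsilon/C$ at the moment of invoking Lemma~\ref{basic}.
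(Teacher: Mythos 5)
Your proposal follows the paper's route closely: straighten the flow with Theorem~\ref{robinson}, insert a rotation via Lemma~\ref{basic} (applied with $\epsilon/C$ to absorb the chart's $C^2$-norm), pull back by the chart, and invoke compactness of $M$ for a uniform $\alpha_0$. Two details need tightening.

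First, Definition~\ref{rlf} quantifies over \emph{all} open $U\subset B(x,r)$; Remark~\ref{vitali} lets one reduce to balls, but to arbitrary balls $B(x',r')\subset B(x,r)$, not only $U=B(x,r)$ itself. Handling $x'\neq x$ requires recentering the Darboux chart: the paper works with $g_{x'}=g-g(x')$ so that $g_{x'}(x')=0$ and Lemma~\ref{basic} then applies around the origin for each admissible ball, which is essential for the later Kakutani-tower covering arguments that use many small sub-balls.

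Second, the measure estimate $\muo(U\setminus K)\leq\kappa\,\muo(U)$ does not follow from \eqref{time ev trans measure}, which compares a transversal set to its Poincar\'e image under the flow rather than quantifying the distortion introduced by the chart $g$. What is needed is that on small balls $g$ is close to a linear symplectic map, so the Euclidean volume ratio $\nu^3$ transfers to the $\muo$-ratio up to a distortion factor; that factor is absorbed by taking $\nu$ strictly larger than $(1-\kappa)^{1/3}$. The paper implements this by squeezing $g_{x'}(U)$ between Euclidean balls $B(0,r_1)\subset g_{x'}(U)\subset B(0,r_2)$ with $r_2/r_1$ close to $1$ and choosing $\nu$ so that $(r_1/r_2)^3\nu^3\geq 1-\kappa$. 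A smaller point: the asserted exact equality $\Phi_{\hat H}^1(y)=R_\alpha$ on all of $K$ (i.e. for $|y_3|<r\nu$, not just $y_3=0$) is a little too strong, since for $|y_3|$ close to $r\nu$ the orbit can momentarily leave the slab $\{\widetilde\ell\equiv 1\}$, producing an $O(\alpha^2)$ correction to the rotation angle; this is harmless because only $\gamma$-closeness is required by Definition~\ref{rlf}, but the claim should be stated as an approximation rather than an identity.
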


\begin{proof}
Let $\gamma>0$.
We start by choosing $r>0$ sufficiently small such that:
\begin{itemize}
\item
$B(x,r)$ is inside the neighbourhood given by Lemma~\ref{robinson}. Notice that by taking the transversal section $\Sigma=B(x,r)$ in the proof of the lemma, such neighbourhood can be extended along its orbit to an open set $A$ containing $F_H^1(x,r)$, where $F_H^\tau(x,r)=\bigcup_{0\leq t\leq\tau}\varphi_H^t(B(x,r))$.
So, a $C^s$-symplectomorphism $g\colon A\to\Rr^4$ exists satisfying: $g(B(x,r))$ is an orthogonal section to $X_{H_0}$ at $g(x)=0$, $H=H_0\circ g$, and all norms of the derivatives are bounded. 
Moreover, the derivatives of $g$ and $g^{-1}$ are of order $r$-close to the identity tangent map $I$ on local coordinates;
\item
$\mathcal{F}_H^1(x,B(x,r))$ is not self-intersecting;
\item
$F_H^\varrho(x,r)\subset \mathcal{F}_H^1(x,B(x,r))$ (recall that $0<\varrho<1$ is a fixed constant introduced before Lemma~\ref{basic}).
\end{itemize}

Let $U=B(x',r')\subset B(x,r)$, $\widehat\epsilon>0$ and $0<\widehat\kappa<1$. Define $g_{x'}=g-g(x')$ and $\widehat U=g_{x'}(U)$. For $r$ small we find $r_1,r_2>0$ such that $B(0,r_1)\subset \widehat U\subset B(0,r_2)$ and $r_2/r_1=[(1-\widehat\kappa)^{-1/3}+1]/2>1$.
Setting $\nu=[1+(1-\widehat\kappa)^{1/3}]/2<1$, Lemma~\ref{basic} gives us that there is $\alpha_0=\alpha_0(H,\widehat\epsilon,\widehat\kappa)>0$ such that for any $0<\alpha\leq\alpha_0$ and using the radius $r_1$ we have that $\Phi_{H_0}^t(0)\circ R_{\alpha}$ is $(\widehat\epsilon,\widehat\kappa)$-realizable of length-$1$ at the origin. 
Take the obvious variables in the definition $\widehat K\subset\widehat U$ and $\widehat H$ such that $\widehat K =B(0,r_1\nu)$ with $\overline{\mu}(\widehat{K})= \pi (r_1\nu)^3$ and $\|\widehat H-H_0\|_{C^2}<\widehat\epsilon$. Then, $\overline{\mu}(\widehat{K}) \geq (1-\widehat\kappa)\overline{\mu}(\widehat{U})$.

Define $K=g_{x'}(\widehat K)\subset U$ and $\widetilde H=\widehat H\circ g_{x'}$. If $\widehat\epsilon$ and $\widehat\kappa$ are small enough (depending on $\epsilon$, $\kappa$ and the norms of the derivatives of $g$), we get that Definition~\ref{rlf}~(1) is satisfied and 
$$
\|\widetilde H-H\|_{C^2}=
\|(\widehat H-H_0)\circ g_{x'}\|_{C^2}
\ll \widehat\epsilon <\epsilon.
$$
We use the same notation $\ll$ as in the proof of Lemma~\ref{basic}.
By construction it is simple to check that Definition~\ref{rlf}~(2a) and (2b) also hold.

As discussed before, Lemma~\ref{robinson} determines the existence of a neighborhood at each regular point of $M$ and a $C^{s}$-symplectomorphism straightening the flow. 
By compacity of $M$ the derivatives of the symplectomorphism up to the order $s$ are uniformly bounded on small length $1$ flowboxes. For this reason $\alpha_{0}$ given above (depending on $\widehat\epsilon$ and $\widehat\kappa$) was chosen to be independent of $x\in M$.

It remains to check (c) in Definition~\ref{rlf}. This will require further restrictions on $r$, depending on $\gamma$.
By definition, the time-$1$ transversal linear Poincar\'e flow on $\mathcal{N}_y\subset T_yH^{-1}(H(y))$ is
$$
\Phi_{\widetilde H}^1(y) =
\Pi_{\varphi_{\widetilde H}^1(y)} \, Dg^{-1}(\widehat y) \,
D\varphi_{\widehat H}^1(g(y))\, Dg (y)
$$
for $y\in K$, and in $x$ yields
$$
\Phi_H^1(x)\circ R_\alpha =
\Pi_{\varphi_H^1(x)} \, Dg^{-1}(\widehat x) \, Dg(x) \,R_\alpha,
$$
where $\widehat x=\varphi_{H_0}^1\circ g(x)=(1,0,0,0)$ and $\widehat y=\varphi_{\widehat H}^1\circ g(y)$ are of order $r$-close.
Notice that $\|\Pi_{\varphi_{\widetilde H}^1(y)}-\Pi_{\varphi_H^1(x)}\| \ll r$ and
$$
\|Dg^{-1}(\widehat y)-
Dg^{-1}(\widehat x)\|
\ll r.
$$
Therefore, 
$
\|\Phi_{\widetilde H}^1(y)-\Phi_H^1(x)\circ R_\alpha\| \ll
r+ \|\Upsilon\|,
$
where
$$
\Upsilon = \Pi_{\varphi_H^1(x)} Dg^{-1}(\widehat x) \, \left[
D\varphi_{\widehat H}^1(g(y))\, Dg (y) -
Dg(x) \,R_\alpha
\right].
$$
Moreover, $\|Dg^{-1}(\widehat x)-I\|\ll r$. So, 
$$
\|\Upsilon\| \ll r+
\|\Pi_{\varphi_H^1(x)} \left(
R_\alpha\, Dg(y)-Dg(x)\,R_\alpha
\right)\|
$$
where we have also used $\|\Pi_{\varphi_H^1(x)} - \pi_0\|\ll r$ and $\pi_0 D\varphi_{\widehat H}^1(0,y_2,0,y_4)=R_{\alpha}$.
Finally, since 
$$
R_\alpha\, Dg(y)-Dg(x)\,R_\alpha=R_\alpha(Dg(y)-I)+(I-Dg(x))R_\alpha,
$$
we obtain the bound
$$
\|\Phi_{\widetilde H}^1(y)-\Phi_H^1(x)\circ R_\alpha\| \ll r <\gamma
$$
for $r\ll\gamma$ small enough.
\end{proof}

\begin{remark}
A similar result holds true also for $R_{\alpha}\circ{\Phi_{H}^{t}(x)}$ using essentially the same proof.
\end{remark}

\end{subsection}
\end{section}


\begin{section}{Proof of Proposition~\ref{main}}\label{end}

We present here a sketch of how to complete the proof of Proposition~\ref{main}; see~\cite{Be} for full details. 
We would like to highlight the fact that our result does not hold for a $C^2$ Hamiltonian $H$, since the perturbed one $\widetilde H$ has to be one degree of diferenciability less. 
The differentiability loss comes from the symplectomorphism obtained in Theorem~\ref{robinson} that rectifies the flow.

\subsection{Local}

The lemma below states that the absence of dominated splitting is sufficient to interchange the two directions of non-zero Lyapunov exponents along an orbit segment by the means of a realizable flow.

\begin{lemma}\label{exchange}
Let $H\in C^{s+1}(M,\Rr)$, $s\geq2$ or $s=\infty$, $\epsilon>0$ and
$0<\kappa<1$. There exists $m\in{\mathbb{N}}$, such that for
every $x\in{\mathcal{R}(H)\cap\mathcal{O}(H)}$ with a positive Lyapunov exponent and satisfying  
$$
\frac{\|\Phi_{H}^{m}(x)|\mathcal{N}^{-}_{x}\|}{\|\Phi_{H}^{m}(x)|\mathcal{N}^{+}_{x}\|}\geq{\frac{1}{2}},
$$
there exists a $(\epsilon,\kappa)$-realizable linear flow $L$ of length $m$ at $x$ such that 
$$
L^{m}(x)\,\mathcal{N}_{x}^{+}=\mathcal{N}_{\varphi_{H}^{m}(x)}^{-}.
$$
\end{lemma}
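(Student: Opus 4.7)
My plan is to construct $L$ by concatenating $m$ length-$1$ realizable pieces of the form $\Phi_{H}^{t}\circ R_{\alpha_{k}}$ produced by Lemma~\ref{rlf1}, with the angles $\alpha_{k}\in[-\alpha_{0},\alpha_{0}]$ selected, using the no-$m$-domination hypothesis, so that the composed map sends $\mathcal{N}^{+}_{x}$ onto $\mathcal{N}^{-}_{\varphi_{H}^{m}(x)}$.

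The first step is to fix the parameters. Given $\epsilon$ and $\kappa$, set $\kappa'=\kappa/m$ (with $m$ to be chosen) and apply Lemma~\ref{rlf1} to obtain a uniform $\alpha_{0}=\alpha_{0}(H,\epsilon,\kappa')>0$ such that, at every non-periodic point of $M$ and for every $|\alpha|\leq\alpha_{0}$, the length-$1$ flow $\Phi_{H}^{t}(\cdot)\circ R_{\alpha}$ is $(\epsilon,\kappa')$-realizable. The point $x$ of the hypothesis will be assumed non-periodic (or of period larger than $m$); periodic orbits are countable and can be discarded from our considerations. For any choice of angles $\alpha_{1},\dots,\alpha_{m}\in[-\alpha_{0},\alpha_{0}]$, iterated application of Lemma~\ref{trivial} to the pieces located at $\varphi_{H}^{k-1}(x)$, $k=1,\dots,m$, produces a length-$m$ flow $L$ at $x$ that is $(\epsilon,m\kappa')=(\epsilon,\kappa)$-realizable and acts on $\mathcal{N}_x$ as
$$
L^{m}(x)=\Phi_{H}^{1}(\varphi_{H}^{m-1}(x))\,R_{\alpha_{m}}\;\cdots\;\Phi_{H}^{1}(x)\,R_{\alpha_{1}}.
$$

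The heart of the proof is the selection of the angles. Since the symplectic quotient bundle $\mathcal{N}$ has rank $2$, in the moving Oseledets frame each factor $\Phi_{H}^{1}(\varphi_{H}^{k-1}(x))$ is represented by a diagonal matrix $\mathrm{diag}(a_{k},b_{k})$ (with $|a_{k}b_{k}|=1$ by symplecticity), and the hypothesis translates to $\prod_{k}|b_{k}/a_{k}|\geq 1/2$. This means the cocycle is ``near-neutral'': at scale $m$, the contraction in $\mathcal{N}^{-}$ never overtakes the expansion in $\mathcal{N}^{+}$ by more than a factor of $2$, so in particular small rotations inserted along the orbit are not exponentially damped by the diagonal action. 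The two-dimensional Bochi--Ma\~n\'{e} rotation lemma in its continuous-time incarnation (\cite{Be}) uses precisely this fact to produce a sequence $(\alpha_{1},\dots,\alpha_{m})$ with $|\alpha_{k}|\leq\alpha_{0}$ whose composed geometric effect, in the face of the diagonal factors, rotates $\mathcal{N}^{+}_{x}$ by exactly $\pi/2$ onto $\mathcal{N}^{-}_{\varphi_{H}^{m}(x)}$. The integer $m=m(H,\epsilon,\kappa)$ is then fixed so that it is large enough for this rotation lemma to apply with the available $\alpha_{0}=\alpha_{0}(H,\epsilon,\kappa/m)$.

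The hardest part will be this last, purely linear-algebraic step: converting the single global inequality on $m$-iterate norms into an explicit exchange of the two Oseledets lines through a composition of small rotations interleaved with a possibly highly non-uniform diagonal $2\times 2$ cocycle, while simultaneously balancing $m$ against the $m$-dependent shrinkage of $\alpha_{0}(H,\epsilon,\kappa/m)$ that the concatenation scheme forces upon us. Although the heuristic ``without domination, small rotations are not damped'' is transparent, the quantitative realization is precisely the delicate compactness argument carried out in \cite{Be}, and is the reason the perturbation is spread over $m$ unit steps rather than applied at a single instant.
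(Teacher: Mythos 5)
Your reconstruction captures the structure behind the paper's one-line citation to Lemma~3.15 of \cite{Be}: concatenate unit-time rotated Poincar\'e pieces $\Phi_H^1\circ R_{\alpha_k}$ provided by Lemma~\ref{rlf1}, select the angles by a Bochi--Ma\~n\'e rotation lemma exploiting the failure of $m$-domination, and glue with Lemma~\ref{trivial}. The reduction to a rank-$2$ symplectic cocycle, the composition formula, and the reading of the hypothesis as $\prod_k|b_k/a_k|\geq 1/2$ are all correct, and you rightly flag the balancing of $m$ against $\alpha_0$ as the crux.

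However, the parameter schedule $\kappa'=\kappa/m$ that you write down creates a circularity that is not merely ``delicate'' but, as you have posed it, has no solution. From Lemma~\ref{basic} one reads $\alpha_0\ll\epsilon(1-\nu)^2$, and in Lemma~\ref{rlf1} the value $1-\nu$ is of order $\hat\kappa\asymp\kappa'$, so $\alpha_0(H,\epsilon,\kappa/m)\asymp\epsilon\kappa^2/m^2$ shrinks quadratically in $m$, while the rotation lemma requires $m$ to grow at least like $1/\alpha_0$; the two constraints $m\gtrsim 1/\alpha_0$ and $\alpha_0\asymp\epsilon\kappa^2/m^2$ force $m\lesssim\epsilon\kappa^2<1$, which is impossible. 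You acknowledge the tension and hand it to \cite{Be}, but you do not verify that the fixed-point equation $m\geq m_0\bigl(\alpha_0(\epsilon,\kappa/m)\bigr)$ is solvable, and it is not with this schedule. Two features of the cited proof that your plan misses are relevant here: the rotation lemma produces a sub-block $[m_1,m_2]\subseteq[0,m]$ on which rotations are inserted, with the \emph{unperturbed} $\Phi_H^1$ --- which is $(\epsilon,0)$-realizable and hence costs nothing in the $\kappa$-budget --- occupying the remaining slots, so inserting $R_{\alpha_k}$ at every one of the $m$ steps as your formula does is both unfaithful to \cite{Be} and unnecessarily wasteful; and the quantitative interplay between the per-piece loss and the number of rotated pieces must be set up so that the loss is controlled by the sub-block length rather than by $m$ itself. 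Without addressing this, the step ``$m$ is then fixed so that it is large enough'' is an unsubstantiated assertion rather than a proof.
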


\begin{proof}
The proof is the same as for Lemma 3.15 of~\cite{Be} in which the constructions of Lemma~\ref{rlf1} are used, namely the concatenation of rotated Poincar\'e linear maps. 
\end{proof}

Now we aim at locally decaying the upper Lyapunov exponent.

\begin{lemma}\label{smallnorm}
Let $H\in C^{s+1}(M,\Rr)$, $s\geq2$ or $s=\infty$, and $\epsilon,\delta>0$, $0<\kappa<1$. 
There is $T\colon\Gamma_{m}(H)\to{\mathbb{R}}$ measurable, such that for $\mu$-a.e. $x\in{\Gamma_{m}(H)}$ 
and $t\geq{T(x)}$, we can find a $(\epsilon,\kappa)$-realizable linear flow $L$ at $x$ with length $t$ satisfying 
\begin{equation}\label{inequality}
\frac1t\log \|L^{t}(x)\|<\delta. 
\end{equation}
\end{lemma}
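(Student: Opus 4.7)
Since the bound is trivial when $\lambda^+(H,x)=0$ (take $L=\Phi_H$ and invoke Oseledets), I restrict to the full-measure subset of $\Gamma_m(H)\cap\mathcal{R}(H)\cap\mathcal{O}(H)$ with $\lambda^+(H,x)>0$, where $m$ is the integer furnished by Lemma~\ref{exchange} for the parameters $(\epsilon,\kappa)$. The plan is to realize $L$ as the concatenation
\[
L^t(x)=\Phi_H^{t_2}\!\bigl(\varphi_H^{t_1+m}(x)\bigr)\circ L_{\mathrm{ex}}^m\bigl(\varphi_H^{t_1}(x)\bigr)\circ\Phi_H^{t_1}(x),
\]
with $t_1,t_2\approx(t-m)/2$ and $t_1+m+t_2=t$, where $L_{\mathrm{ex}}^m$ is the exchange realizable flow of Lemma~\ref{exchange} applied at the midpoint $\varphi_H^{t_1}(x)$, sending $\mathcal{N}^+$ into $\mathcal{N}^-$. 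Because $\Phi_H$ is tautologically $(\epsilon,0)$-realizable, Lemma~\ref{trivial} makes the whole concatenation $(\epsilon,\kappa)$-realizable of length $t$ at $x$.

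The hypothesis of Lemma~\ref{exchange} has to hold at $\varphi_H^{t_1}(x)$. Since $\Gamma_m(H)$ is $\varphi_H^t$-invariant and contains no $m$-dominated invariant subset through $x$, the set of $s\geq 0$ for which
\[
\frac{\|\Phi_H^m(\varphi_H^s(x))|\mathcal{N}^-\|}{\|\Phi_H^m(\varphi_H^s(x))|\mathcal{N}^+\|}\geq\tfrac12
\]
has positive upper density along the forward orbit. A Pliss-type argument on an $O(1)$-window around $(t-m)/2$ then locates an admissible $t_1$, shifting the midpoint by only a bounded amount. The measurable function $T(x)$ is defined as the first $t$ at which Oseledets convergence, the angle subexponentiality \eqref{angle2}, and this Pliss window simultaneously hold to a prescribed tolerance; its measurability is granted by Egorov.

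For the norm bound, observe that $L_{\mathrm{ex}}^m$ is built in Lemma~\ref{exchange} from a bounded number of small rotations (Lemma~\ref{rlf1}) concatenated with the unperturbed $\Phi_H^m$, so its operator norm is dominated by a constant $C=C(H,\epsilon,m)$ independent of $x$. For any unit $v=v^++v^-\in\mathcal{N}_x$ and any $\eta>0$, once $t\geq T(x)$ Oseledets yields $\|\Phi_H^{t_i}|\mathcal{N}^{\pm}\|\leq e^{(\pm\lambda^+(H,x)+\eta)t_i}$ for $i=1,2$. Because the exchange flips $\mathcal{N}^+\leftrightarrow\mathcal{N}^-$, the image of $v^+$ contracts at rate $-\lambda^+(H,x)$ during the second block, producing
\[
\|L^t(x)v^+\|\leq C\,e^{(\lambda^++\eta)t_1}\,e^{(-\lambda^++\eta)t_2}\leq C\,e^{\lambda^+(H,x)(m+1)+\eta t},
\]
and symmetrically for $v^-$. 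Taking $\eta<\delta/2$ and enlarging $T(x)$ so that the prefactor contributes less than $\delta/2$ to $\tfrac1t\log\|L^t(x)\|$ yields \eqref{inequality}.

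The main obstacle is the second step: extracting an admissible midpoint time $t_1\approx(t-m)/2$ from the purely qualitative information $x\in\Gamma_m(H)$ requires a Ma\~n\'e--Pliss argument combined with the Oseledets regularity at $x$, and is the technical core that transplants essentially unchanged from the volume-preserving analogue in~\cite{Be} to the Hamiltonian framework developed here.
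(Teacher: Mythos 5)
Your overall architecture is exactly the paper's: realize $L$ as $\Phi_H^{t_2}\circ L_{\mathrm{ex}}^m\circ\Phi_H^{t_1}$ with the exchange from Lemma~\ref{exchange} inserted at a time $\approx(t-m)/2$ where the $m$-domination ratio fails, invoke Lemma~\ref{trivial} for realizability of the concatenation, and use Oseledets regularity on the two outer blocks to cancel the expansion against the contraction. The paper only sketches this, deferring the recurrence step to~\cite{Be} (Lemma~3.18) and~\cite{B} (Lemma~3.12), and you have tried to fill that step in.

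That is where the gap lies. You assert that positive upper density of the failure times $\{s:\|\Phi_H^m(\varphi_H^s(x))|\mathcal{N}^-\|/\|\Phi_H^m(\varphi_H^s(x))|\mathcal{N}^+\|\geq 1/2\}$ plus a ``Pliss-type argument on an $O(1)$-window'' locates an admissible $t_1$ ``shifting the midpoint by only a bounded amount.'' This does not follow. What Birkhoff's ergodic theorem actually gives (for a.e.\ $x$, since the failure set must carry positive measure in the ergodic component of $x$, else that component would be $m$-dominated) is an \emph{asymptotic density} $d>0$ of failure times, and positive density forces gaps of size $o(t)$ but not of size $O(1)$: a set with density $1/2$ can perfectly well have gaps of length $\varepsilon t$ near $t/2$. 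The Pliss lemma is also the wrong tool here (it concerns times with controlled forward/backward averages, i.e.\ hyperbolic times, not hitting times of a positive-measure set); the recurrence estimate in~\cite{B} is a genuine Birkhoff/covering argument and only produces $t_1\in[(t-m)/2-\varepsilon t,(t-m)/2+\varepsilon t]$ for any prescribed $\varepsilon>0$, with $T(x)$ depending on $\varepsilon$. Consequently your norm bound $C\,e^{\lambda^+(m+1)+\eta t}$, which uses $|t_1-t_2|\leq m+1$, is not available. The proof is repairable: with $|t_1-t_2|\leq 2\varepsilon t$ one gets $\|L^t(x)\|\leq C\,e^{(2\lambda^+\varepsilon+\eta)t}$, and since $\lambda^+(H,\cdot)\leq K$ uniformly on the compact manifold ($X_H$ is $C^1$), choosing $\varepsilon<\delta/(4K)$ and $\eta<\delta/2$ recovers \eqref{inequality}. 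But as written, the ``bounded window'' claim is a genuine gap, not a harmless simplification. A minor additional point: Egorov's theorem is not what makes $T$ measurable — $T$ is an infimum of measurable conditions, which suffices; Egorov would only be needed to replace $T$ by a constant on a large-measure set, which this lemma does not require.
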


\begin{proof}
We follow Lemma 3.18 of~\cite{Be}.
Notice that for $\mu$-a.e. $x\in\Gamma_{m}(H)$ with $\lambda=\lambda^{+}(H,x)>0$ and due to the nice recurrence properties of the function $T$ (see Lemma 3.12 of~\cite{B}) we obtain for every (very large) $t\geq T(x)$ that  
$$
\frac{\|\Phi^{m}_{H}(y)|\mathcal{N}^{-}_{y}\|}{\|\Phi^{m}_{H}(y)|\mathcal{N}^{+}_{y}\|}\geq{\frac{1}{2}}
$$ 
for $y=\varphi_{H}^{s}(x)$ with $s\approx t/2$. 

Now, by Lemma~\ref{exchange} we obtain a $(\epsilon,\kappa)$-realizable linear flow $L_2^t$ such that $L_{2}^{m}\,\mathcal{N}_{y}^{+}=\mathcal{N}_{\varphi_{H}^{m}(y)}^{-}$. 
We consider also the realizable linear flows $L_{1}^t\colon\mathcal{N}_{x}\to\mathcal{N}_{y}$ and $L_{3}^t\colon\mathcal{N}_{\varphi_{H}^{m}(y)}\to\mathcal{N}_{\varphi_{H}^{t}(x)}$ given by $\Phi_H^t$ for $0\leq t\leq s$ and $t\geq m$, respectively. 
Then we use Lemma~\ref{trivial} and concatenate $L_1\to L_2\to L_3$ as $L^t$, which is a $(\epsilon,\kappa)$-realizable linear flow at $x$ with length $t$.

The choice of $t\gg m$ and the exchange of the directions will cause a decay on the norm of $L^t$. Roughly that is:
\begin{itemize}
\item
in $\mathcal{N}^{+}_{x}$ the action of $L_{1}$ is approximately $e^{\lambda t/2}$, 
\item
in $\mathcal{N}^{-}_{\varphi_{H}^{m}(y)}$ the action of  $L_{3}$ is approximately $e^{-\lambda t/2}$ and 
\item
$L_{2}$ exchange these two rates. 
\end{itemize}
Therefore, $\|L^{t}(x)\|<{e^{t\delta}}$.
\end{proof}

\subsection{Global}

Notice that, in Lemma~\ref{smallnorm}, we obtained $\|L^{t}(x)\|<e^{t\delta}$. However, we still need to get an upper estimate of the upper Lyapunov exponent. 
Due to (\ref{infimum}) this can be done without taking limits, say in finite time computations. In other words, we will be using the inequality
\begin{equation}\label{infimum2}
\int_{\Gamma_{m}(H)}\lambda^{+}(\tilde{H},x)d\mu(x)\leq \int_{\Gamma_{m}(H)}\frac{1}{t}\log\|\Phi^{t}_{\tilde{H}}(x)\|d\mu(x),
\end{equation}
which is true for all $t\in\mathbb{R}$.
Therefore, $\delta$ is larger than the upper Lyapunov exponent of at least most of the points near $x$.

To prove Proposition~\ref{main} we turn Lemma \ref{smallnorm} global. This is done by a recurrence argument based in the Kakutani towers techniques entirely described in~\cite{Be} section 3.6. In broad terms the construction goes as follows: 
\begin{itemize}
\item
Take a very large $m\in\mathbb{N}$ from Lemma~\ref{exchange}. 
Then Lemma~\ref{smallnorm} gives us a measurable function $T\colon\Gamma_{m}(H)\to{\mathbb{R}}$ depending on $\kappa$ and $\delta$. Let $\delta^2=\kappa$.
\item
For $x_{1}\in\Gamma_{m}(H)$, the realizability of the flow $L^t(x_{1})$ guarantees that we have a $t$-length flowbox at $x_{1}$ (a tower $\mathcal{T}_{1}$) associated to the perturbed Hamiltonian $\widetilde{H}_{1}$. 
If we take a point in the measurable set $K_{1}$ (cf.~(\ref{rlf 0}) of Definition~\ref{rlf}) contained in the base of the tower, then by~(\ref{rlf 3}) of Definition~\ref{rlf} and Lemma~\ref{smallnorm}, we have $\|\Phi_{\widetilde{H}_{1}}^{t}(y)\|<e^{2\delta t}$ for all $y\in K_{1}$.
\item
Now, for $x_{2},...,x_{j}\in \Gamma_{m}(H)$, where $j\in\mathbb{N}$ is large enough, we define self-disjoint towers $\mathcal{T}_{i}$, $i=1,...,j$, which (almost) cover the set $\Gamma_{m}(H)$ in the measure theoretical sense. 
We take these towers such that their heights are approximately the same, say $h$. 
\item
The $C^s$ Hamiltonian $\widetilde{H}$ is defined by glueing together all perturbations $\widetilde{H}_{i}$, $i=1,...,j$. 
\item
Consider $\mathcal{T}=\cup_i{\mathcal{T}_{i}}$, $U=\cup_{i}U_{i}$ and $K=\cup_{i}K_{i}$. Clearly $K\subset U$. 
Note that for points in $U\setminus K$ we may not have $\|\Phi_{\widetilde{H}_{1}}^{t}(\cdot)\|<e^{2\delta t}$. 
\item
Denote by $\mathcal{T}^{K}$ the subtowers of $\mathcal{T}$ with base $K$ instead of $U$. By~(\ref{rlf 0}) of Definition~\ref{rlf} we obtain that $\overline{\mu}(U\setminus K)\leq\kappa\overline{\mu}(U)$, hence $\mu(\mathcal{T}\setminus\mathcal{T}^{K})<\mu(\mathcal{T})\leq\delta^2$. 

\end{itemize}

We claim that it is sufficient to take $t=h\delta^{-1}$ in \eqref{infimum2}. It follows from~\eqref{inequality} that we only control the iterates that enter the base of $\mathcal{T}^{K}$. 
Since the height of each tower is approximately $h$ the orbits leave $\mathcal{T}^{K}$ at most $\delta^{-1}$ times. 
For each of those times the chance of not re-entering again is less than $\delta^{2}$.
So, the probability of leaving $\mathcal{T}^{K}$ along $t$ iterates is less than $\delta$. 
In conclusion, most of the points in $\Gamma_{m}(H)$ satisfy the inequality \eqref{inequality} and Proposition~\ref{main} is proved.

\end{section}


\subsection*{Acknowledgments} 

We would like to thank Gonzalo Contreras and the anonymous referee for useful comments. MB was supported by Funda\c c\~ao para a Ci\^encia e a Tecnologia, SFRH/BPD/20890/2004. JLD was partially supported by Funda\c c\~ao para a Ci\^encia e a Tecnologia through the Program~FEDER/POCI~2010.


\end{document}